\documentclass[a4paper,12pt]{amsart}
\usepackage{amsmath}
\usepackage{amssymb}
\usepackage{mathrsfs}
\usepackage{enumerate}
\usepackage{ifthen}
\usepackage{graphicx}
\usepackage{caption}
\usepackage[T1]{fontenc} 
\usepackage{tikz}
\usepackage{cite}


\setlength{\topmargin}{-0.02in}
\nonstopmode \numberwithin{equation}{section}
\setlength{\textwidth}{15cm} \setlength{\oddsidemargin}{0cm}
\setlength{\evensidemargin}{0cm} \setlength{\footskip}{40pt}
\pagestyle{plain}

\newtheorem{theorem}{Theorem}[section]

\newtheorem{corollary}{Corollary}[section]

\newtheorem{problem}{Problem}[section]

\theoremstyle{remark}
\theoremstyle{definition}
\newtheorem{remark}{Remark}[section]
\newtheorem{definition}{Definition}[section]
\newtheorem{example}{Example}[section]

\theoremstyle{plain}
\newtheorem*{thmA}{Theorem A}
\newtheorem*{thmB}{Theorem B}
\newtheorem*{thmC}{Theorem C}

\newtheorem*{lemA}{Lemma A}

\numberwithin{equation}{section}
\numberwithin{theorem}{section}

\newcounter{minutes}\setcounter{minutes}{\time}
\divide\time by 60
\newcounter{hours}\setcounter{hours}{\time}
\multiply\time by 60
\addtocounter{minutes}{-\time}

\begin{document}

\title{Growth, Distortion, Pre-Schwarzian and Schwarzian norm estimates for Generalized Robertson class}

\author{Molla Basir Ahamed$^*$}
\address{Molla Basir Ahamed, Department of Mathematics, Jadavpur University, Kolkata-700032, West Bengal, India.}
\email{mbahamed.math@jadavpuruniversity.in}

\author{ Rajesh Hossain}
\address{Rajesh Hossain, Department of Mathematics, Jadavpur University, Kolkata-700032, West Bengal, India.}
\email{rajesh1998hossain@gmail.com}

\subjclass[2020]{Primary: 30C45, 30C55}
\keywords{Growth and Distortion theorems, Schwarizian and pre-Schwarzian norm, Sharp bounds, radius problems}

\def\thefootnote{}
\footnotetext{ {\tiny File:~\jobname.tex,
printed: \number\year-\number\month-\number\day,
          \thehours.\ifnum\theminutes<10{0}\fi\theminutes }
} \makeatletter\def\thefootnote{\@arabic\c@footnote}\makeatother

\begin{abstract}
Let $\mathcal{A}$ denote the class of analytic functions $f$ in the unit disc $\mathbb{D}=\{z\in\mathbb{C}:\;|z|<1\}$ normalized by $f(0)=0$ and $f^{\prime}(0)=1$. For $-\pi/2<\alpha<\pi/2$ and $0\leq \beta<1$ , let $\mathcal{SP_\alpha}(\beta)$ be the subclass of $\mathcal{A}$ defined by
\begin{align*}
	\mathcal{SP_\alpha}(\beta)=\bigg\{{\rm Re}\;\bigg\{e^{\iota\alpha}(1+\frac{zf^{\prime\prime}(z)}{f^{\prime}(z)}\bigg\}>\beta\cos\alpha\;\;\mbox{for}\;z\in\mathbb{D}\bigg\}.
\end{align*}
This paper investigates the geometric properties of functions belonging to the generalized Robertson class $\mathcal{SP_\alpha}(\beta)$, which consists of $\alpha$-starlike functions of order $\beta$. The primary objective is to derive sharp bounds for the norms of the Schwarzian and pre-Schwarzian derivatives for functions $f$ in this class. These bounds are expressed in terms of the initial coefficient $f^{\prime\prime}(0)$, with particular emphasis on the case where $f^{\prime\prime}(0)=0$. Additionally, we establish sharp distortion and growth theorems for the functions in $\mathcal{SP_\alpha}(\beta)$. Finally, we address the radius problem for this function class. Specifically, we determine the sharp radius of concavity and the sharp radius of convexity for functions in the class $\mathcal{SP_\alpha}(\beta)$. 
\end{abstract}
\maketitle
\pagestyle{myheadings}
\markboth{ Molla Basir Ahamed, Rajesh Hossain }{Pre-Schwarzian, Schwarzian norm estimates, Growth and Distortion theorems}

\section{\bf Introduction}\vspace*{1.5mm}
Let \( \mathbb{D} = \{z \in \mathbb{C} : |z| < 1\} \) be the unit disk, and define \( \mathcal{H} \) as the class of analytic functions on \( \mathbb{D} \). Let $\mathcal{S}^*(\alpha)$ and $\mathcal{C}(\alpha)$ denote respectively, the classes of starlike and convex functions of order $\alpha$ for $0\leq \alpha<1$ in $\mathcal{S}$. It is well-known that a function $f\in\mathcal{A}$ belongs to $\mathcal{S}^*(\alpha)$ if, and only if, ${\rm Re}(zf^{\prime}(z)/f(z))>\alpha$ for $z\in\mathbb{D}$, and $f\in\mathcal{C}(\alpha)$ if, and only if, ${\rm Re}(1+zf^{\prime\prime}(z)/f^{\prime}(z))>\alpha$. Similarly, a function $f\in\mathcal{A}$ belongs to $\mathcal{K}$, the class of close-to-convex functions, if and only if, there exists $g\in\mathcal{S}^*$ such that ${\rm Re}[e^{i\tau}(zf^{\prime}(z))/g(z)]>0$ for $z\in\mathbb{D}$ and $\tau\in (-\pi/2, \pi/2)$. Thus, it is easy to see that  $\mathcal{C}\subset\mathcal{S}^*\subset\mathcal{K}\subset\mathcal{S}$. In particular, when $\tau=0$, then the resulting subclass of the close-to-convex functions is denoted by $\mathcal{K}_0$. The subclass \( \mathcal{LU} \) consists of locally univalent functions, \textit{i.e.,} functions \( f \in \mathcal{H} \) with \( f^{\prime}(z) \neq 0 \) for all \( z \in \mathbb{D} \). For functions $f\in\mathcal{LU}$ defined in a simply connected domain ${\Omega}$, the pre-Schwarzian derivative $P_f$ and the Schwarzian derivative $S_f$ are, respectively, defined by
\begin{align*}
	P_f=\frac{f^{\prime\prime}(z)}{f^{\prime}(z)}\;\mbox{and}\;S_f=(P_f)^{\prime}(z)-\frac{1}{2}(P_f)^2(z)=\frac{f^{\prime\prime\prime}(z)}{f^{\prime\prime}(z)}-\frac{3}{2}\left(\frac{f^{\prime\prime}(z)}{f^{\prime}(z)}\right)^2.
\end{align*}
The pre-Schwarzian and Schwarzian norms are defined by
\begin{align*}
	||P_f||_{\Omega}= \sup_{z\in{\Omega}}|P_f|\eta^{-1}_{\Omega}\;\mbox{and}\;||S_f||_{\Omega}= \sup_{z\in{\Omega}}|S_f|\eta^{-2}_{\Omega}
\end{align*}
respectively, where $\eta_{\Omega}$ is the Poincare density. In particular, if ${\Omega}=\mathbb{D}$, then $||S_f||_{\Omega}$ and $||P_f||_{\Omega}$ are denoted by $||S_f||$ and $||P_f||$,  respectively.\vspace{2mm}

In the following, we discuss some properties of Schwarzian derivatives:
\begin{enumerate}
	\item[$\bullet$] If $\varphi$ is a locally univalent analytic function for which the composition $f\circ \varphi$ is defined, then 
	\begin{align*}
		S_{f\circ \varphi}(z)=S_{f}\circ \varphi(z)\left(\varphi^{\prime}(z)\right)+S_{\varphi}(z).
	\end{align*}
	\item[$\bullet$] The Schwarzian derivative is invariant under M\"obius transformation, \emph{i.e.,} $S_{T\circ\varphi}=S_f$ for any M\"obius transformation $T$ of the form 
	\begin{align*}
		T(z)=\frac{az+b}{cz+d},\; ad-bc\neq 0,\; a, b, c, d\in\mathbb{C}.
	\end{align*}
	\item[$\bullet$] It is easy to verify that $S_f(z)=0$ if, and only, if $f$ is a M\"obius transformation.\vspace{1mm}
	\item[$\bullet$] There is a classical relation between the Schwarzian derivative and second order linear differential equations. If $S_f=2p$ and $u=\left(f^{\prime}\right)^{-1/2}$, then 
	\begin{align*}
		u^{\prime\prime}+pu=0.
	\end{align*}
	Conversely, if $u_1$, $u_2$ are linearly independent solutions of this D.E. and $f=u_1/u_2$, then $S_f=2p$.  
\end{enumerate}
\;\;\;\;\;\;The pre-Schwarzian and Schwarzian derivatives are key tools in geometric function theory, particularly for characterizing Teichmüller space through embedding models. They also play a crucial role in studying the inner radius of univalency for planar domains and quasiconformal extensions \cite{Lehto-1987,Lehto-JAM-1979}. Their study dates back to Kummer (1836), who introduced the Schwarzian derivative in the context of hypergeometric $PDEs$. Since then, extensive research has explored their connections to univalent functions, leading to several sufficient conditions for univalency.
Let $\mathcal{A}$ denote the subclass of $\mathcal{H}$, a class of analytic functions on the unit disk $\mathbb{D}$,  consisting of functions $f$ with normalized conditions $f(0)=f^{\prime}(z)-1=0$. Thus, any function $f$ in $\mathcal{A}$ has the Taylor series expansion of the form 
\begin{align}\label{Eq-1.1}
	f(z)=z+\sum_{n=2}^{\infty}a_nz^n\;\; \mbox{for all}\;\; z\in\mathbb{D}.
\end{align}
\;\;\;\;\;Let $\mathcal{S}$ be the subclass of $\mathcal{A}$ consisting of univalent (that is, one-to-one) functions. A function $f\in \mathcal{A}$ is called starlike (with respect to the origin) if $f(\mathbb{D})$ is starlike with respect to the origin and convex if $f(\mathbb{D})$ is convex. The class of all univalent starlike (resp. convex) functions in $\mathcal{A}$ is denoted by $\mathcal{S}^*$ (resp. $\mathcal{C}$). However, it is well-known that a function $f\in\mathcal{S}^*$ (resp. $f\in\mathcal{C}$) if, and only if, 
\begin{align*}
	{\rm Re}\left(\frac{zf^{\prime}(z)}{f(z)}\right)>0\; \left(\mbox{resp.}\; {\rm Re}\left(1+\frac{zf^{\prime\prime}(z)}{f^{\prime}(z)}\right)>0\right),\; z\in\mathbb{D}.
\end{align*}
The following observations are important for starlike and convex functions.
\begin{enumerate}
	\item[$\bullet$] The characterizations of starlikeness or convexity are sufficient but not necessary for univalency.\vspace{2mm}
	
	\item[$\bullet$] {\bf Nehari's criteria.} Nehari developed univalency involving Schwarzian derivative, where sufficient condition is almost necessary in the sense that scalar terms vary.
\end{enumerate}
The next two results provide necessary and sufficient criteria for a function to be univalent.
\begin{thmA}\emph{(Kraus-Nehari's Theorem) (Necessary condition)}
	Let $f$ be a univalent function. Then $f$ satisfies
	\begin{align*}
		|S_f(z)|\leq\frac{6}{(1-|z|^2)^2}\; \mbox{for}\; z\in\mathbb{D}.
	\end{align*}
	Moreover, the constant $6$ cannot be replaced by a smaller one.
\end{thmA}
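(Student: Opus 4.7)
The plan is to reduce the general inequality to its value at the origin via a disc-automorphism change of variable, and then to extract the origin bound from the classical Gronwall area theorem after expressing $S_f(0)$ in terms of the Taylor coefficients of $f$.

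First I would fix an arbitrary $z_0\in\mathbb{D}$ and introduce the disc automorphism $\varphi(z)=(z+z_0)/(1+\overline{z_0}z)$. Setting
\[
F(z)=\frac{f(\varphi(z))-f(z_0)}{(1-|z_0|^2)\,f'(z_0)},
\]
$F$ lies in $\mathcal{S}$, since $f$ is univalent, $\varphi$ is an automorphism of $\mathbb{D}$, and the post-composition is affine and normalizes $F$ at the origin. Because the Schwarzian is invariant under M\"obius (in particular affine) post-composition, and since $S_\varphi\equiv 0$ for the M\"obius map $\varphi$, the chain rule recalled in the introduction yields
\[
S_F(z)=S_f(\varphi(z))\bigl(\varphi'(z)\bigr)^2.
\]
Evaluating at $z=0$ and using $\varphi'(0)=1-|z_0|^2$ gives $S_F(0)=(1-|z_0|^2)^2\,S_f(z_0)$, so the theorem reduces to the single inequality $|S_F(0)|\le 6$ valid for every $F\in\mathcal{S}$.

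Next I would expand $F(z)=z+A_2 z^2+A_3 z^3+\cdots$ and check by a direct computation that $S_F(0)=6(A_3-A_2^2)$. To estimate the right-hand side I would invoke Gronwall's area theorem applied to the inversion
\[
g(w)=\frac{1}{F(1/w)}=w-A_2+\frac{A_2^2-A_3}{w}+\cdots,
\]
which is meromorphic and univalent on $\{|w|>1\}$. The area inequality $\sum_{n\ge 1} n|b_n|^2\le 1$ then forces $|A_2^2-A_3|\le 1$, whence $|S_F(0)|\le 6$.

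Finally, for sharpness I would exhibit the Koebe function $k(z)=z/(1-z)^2=z+2z^2+3z^3+\cdots$: here $A_3-A_2^2=3-4=-1$, so $|S_k(0)|=6$, showing that $6$ cannot be replaced by any smaller constant. The step requiring real care is the bookkeeping in the chain rule, namely verifying that the affine normalization built into $F$ contributes nothing to $S_F$ and that the factor $\bigl(\varphi'(0)\bigr)^2$ is exactly $(1-|z_0|^2)^2$; the remaining steps are routine once Gronwall's area theorem is accepted as a standard tool.
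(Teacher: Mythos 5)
Your proof is correct and complete: the reduction to the origin via the automorphism $\varphi$, the identity $S_F(0)=6(A_3-A_2^2)$, the area-theorem bound $|A_2^2-A_3|\le 1$, and the Koebe function for sharpness together constitute the classical Kraus--Nehari argument. The paper itself offers no proof of Theorem A --- it is quoted as a known result with references to Kraus and Nehari --- so there is nothing to compare against; note only that you correctly use the chain rule with the factor $\bigl(\varphi'(z)\bigr)^2$, whereas the version displayed in the paper's introduction omits the square.
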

\begin{thmB}\emph{(Nehari's Theorem) (Sufficient condition)}
	Let $f$ be a locally univalent function. If $f$ satisfies
	\begin{align*}
		|S_f(z)|\leq\frac{2}{(1-|z|^2)^2}\; \mbox{for}\; z\in\mathbb{D},
	\end{align*}
	then $f$ is univalent in $\mathbb{D}$. Moreover, the constant $2$ cannot be replaced by a larger one.
\end{thmB}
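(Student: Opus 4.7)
The plan is to exploit the classical link, recalled in the introduction, between the Schwarzian and the second order linear equation $u''+pu=0$ with $2p=S_f$. Writing $f=u_1/u_2$ for linearly independent solutions, a coincidence $f(z_1)=f(z_2)$ with $z_1\neq z_2$ is equivalent to the existence of a nontrivial solution $v=\alpha u_1+\beta u_2$ that vanishes at both points. Thus the theorem reduces to showing that under the hypothesis $|p(z)|\leq 1/(1-|z|^2)^2$, no nontrivial solution of $v''+pv=0$ can have two distinct zeros in $\mathbb{D}$.

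The first step would be a reduction to the real diameter. Given hypothetical zeros $z_1,z_2\in\mathbb{D}$, pick a disc automorphism $\varphi$ sending the hyperbolic geodesic through $z_1,z_2$ onto $(-1,1)$. The composition rule $S_{f\circ\varphi}(z)=S_f(\varphi(z))\varphi'(z)^2+S_\varphi(z)$ with $S_\varphi\equiv 0$, together with the identity $(1-|\varphi(z)|^2)|\varphi'(z)|=1-|z|^2$, shows that the Schwarzian hypothesis is invariant under $\varphi$; replacing $f$ by $f\circ\varphi$ one may assume $z_1,z_2\in(-1,1)$.

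The heart of the argument is a Sturm-type comparison with the real equation $w''+w/(1-x^2)^2=0$, which admits the strictly positive solution $w(x)=\sqrt{1-x^2}$ on $(-1,1)$. Setting $h=v/w$, a direct calculation yields
\[
\bigl(w^2(|h|^2)'\bigr)'=2|h'|^2\,w^2+2\bigl(\tfrac{1}{(1-x^2)^2}-\operatorname{Re} p\bigr)|h|^2\,w^2\geq 0,
\]
the last inequality being forced by $\operatorname{Re} p\leq|p|\leq 1/(1-x^2)^2$. Since $v(z_j)=0$ gives $h(z_j)=0$, and since $|h|^2$ has vanishing derivative at any point where $h$ vanishes, the nondecreasing function $w^2(|h|^2)'$ is zero at both endpoints, hence on the whole interval $[z_1,z_2]$; forcing both nonnegative summands on the right to vanish then yields $h'\equiv 0$ and therefore $v\equiv 0$, a contradiction. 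The passage from complex-coefficient comparison to a usable real monotonicity statement is the main technical obstacle, and the key trick is to work with $|h|^2$ rather than with $h$ or $|h|$, since only $|h|^2$ is real and has controllable critical behaviour at the two zeros.

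For sharpness, the extremal is $f_0(z)=\tfrac{1}{2}\log\frac{1+z}{1-z}$, for which $S_{f_0}(z)=2/(1-z^2)^2$ and which is (borderline) univalent. Any strengthening of the constant past $2$ is defeated by taking $S_f=2\lambda/(1-z^2)^2$ with $\lambda>1$: the associated real equation $u''+\lambda(1-x^2)^{-2}u=0$ becomes oscillatory on the diameter, producing two linearly independent solutions whose common ratio must coincide at two distinct real points, so that $f=u_1/u_2$ fails univalence while obeying $|S_f|\le 2\lambda/(1-|z|^2)^2$ throughout $\mathbb{D}$.
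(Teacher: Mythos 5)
The paper does not actually prove Theorem B: it is quoted in the introduction as Nehari's classical sufficiency criterion, with the reference to Nehari (1949), so there is no in-paper argument to compare yours against; I can only judge the proposal on its own terms, and it is essentially a correct reconstruction of Nehari's original proof. The reduction of univalence to disconjugacy of $v''+pv=0$ with $2p=S_f$ is sound (take $u_2=(f')^{-1/2}$, which is nonvanishing on the simply connected $\mathbb{D}$, and $u_1=fu_2$; a coincidence $f(z_1)=f(z_2)=w$ makes $u_1-wu_2$ a nontrivial solution with two zeros), the M\"obius reduction to the diameter is the standard invariance argument, and your key identity checks out: with $v=wh$, $w=\sqrt{1-x^2}$ and $w''=-w/(1-x^2)^2$, one gets
\[
\bigl(w^2(|h|^2)'\bigr)'=2w^2|h'|^2+2\Bigl(\tfrac{1}{(1-x^2)^2}-\operatorname{Re}p\Bigr)w^2|h|^2\ge 0,
\]
and the vanishing of $w^2(|h|^2)'$ at the two zeros forces $h'\equiv 0$, hence $v\equiv 0$ on the segment and then on all of $\mathbb{D}$ by the identity theorem. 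Your monotonicity formulation is a pointwise repackaging of Nehari's integral argument: integrating the identity over $[z_1,z_2]$ recovers his Hardy-type inequality $\int|v'|^2\,dx\ge\int|v|^2(1-x^2)^{-2}\,dx$, so the two routes are equivalent in cost. Two details are asserted rather than proved, though neither is a genuine gap: (i) the sharpness step needs $|S_f(z)|=2\lambda/|1-z^2|^2\le 2\lambda/(1-|z|^2)^2$, which rests on $|1-z^2|\ge 1-|z|^2$ and should be said; (ii) the oscillation of $u''+\lambda(1-x^2)^{-2}u=0$ for $\lambda>1$ deserves the one-line Liouville substitution $u=\sqrt{1-x^2}\,\phi(\operatorname{artanh}x)$, which converts the equation to $\phi''+(\lambda-1)\phi=0$ and yields solutions with infinitely many zeros on the diameter; equivalently, you could simply exhibit $f(z)=\left((1+z)/(1-z)\right)^{i\varepsilon}$ with $\lambda=1+\varepsilon^2$, which is locally univalent, not univalent, and has $S_f=2\lambda/(1-z^2)^2$.
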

This was Nehari's motivation to study about the Schawarzian derivatives as well as Schawarzian norm. It is well-known that the pre-Schwarzian norm $||Pf||\leq 6$ holds for the univalent analytic function $f$ is defined in $\mathbb{D}$. In $1972$, Becker \cite{Becker-JRAM-1983} used the pre-Schwarzian derivative to obtain the sufficient condition that the function in $\mathbb{D}$ is univalent, in other words, if $||Pf||\leq1$, then the function $f$ is univalent in $\mathbb{D}$. In 1976, Yamashita \cite{Yamashita-MM-1976} proved that $||Pf||$ is finite if, and only if, $f$ is uniformly locally univalent in $\mathbb{D}$, \emph{i.e.}, there exists a constant $\rho$ such that $f$ is univalent on the hyperbolic disk $|(z-a)/(1-\bar{a}z)|<\tanh\rho$ of radius $\rho$ for every $a\in\mathbb{D}$. Sugawa \cite{Sugawa-AUMCDS-1996} studied and established the norm of the pre-Schwarzian derivative of the strongly starlike functions of order $\alpha\; (0<\alpha\leq1)$. Yamashita\cite{Yamashita-HMJ-1999} generalized sugawa's results by a general class named Gelfer-starlike of exponential order $\alpha (\alpha>0)$ and the Gelfer-close-to-convex of exponential order $(\alpha,\beta)$ ($\alpha>0$, $\beta>0$). These Gelfer classes also contain the classical starlike, convex, close-to-convex all of order $\alpha$ ($0\leq\alpha<1$), which are denote by $\mathcal{S^*(\alpha)}$, $\mathcal{C(\alpha)}$, $\mathcal{K(\alpha)}$ respectively, and so on.\vspace{2mm}

Here, we recall that a function $f\in\mathcal{A}$ is called close-to-convex if $f(\mathbb{D})$ in $\mathbb{C}$ is the union of closed half lines with pairwise disjoint interiors. However, in \cite{Okuyama-CVTA-2000}, Okuyama  studied the subclass of $\alpha$-spirallike functions of order ($-\pi/2<\alpha<\pi/2$), and later a general class call $\alpha$-spirallike functions of order $\rho$ $(0\leq\rho<1)$ considered by Aghalary and Orouji \cite{Aghalary-Orouji-COAT-2014}. Recently, Ali and Pal \cite{Ali-Pal-MM-2023} studied the sharp estimate of the pre-Schwarzian norm for the Janowski starlike functions. Other subclasses have also been widely studied, such as meromorphic function exterior of the unit disk \cite{Ponnusamy-Sugawa-JKMS-2008}, subclass of strong starlike function \cite{Ponnusamy-Sahoo-M-2008}, uniformly convex and uniformly starlike function \cite{Kanas-AMC-2009} and bi-univalent function \cite{Rahmatan-Najafzadeh-Ebadian-BIMS-2017}. For the pre-Schwarzian norm estimates of other function forms such as convolution operator and integral operator, we refer to the articles \cite{Choi-Kim-Ponnusamy-Sugawa-JMAP-2005,Kim-Sugawa-PEMS-2006,Parvatham-Ponnusamy-Sahoo-HMJ-2008,Ponnusamy-Sahoo-JMAA-2008} and references therein. The pioneering work on the bound \( ||Sf|| \leq 6 \) for a univalent function \( f \in \mathcal{A} \) was first introduced by Kraus \cite{Kraus-1932} and later revisited by Nehari \cite{Nehari-BAMS-1949}. In the same paper, Nehari also proved that if $||Sf||\leq2$, then the function $f$ is univalent in $\mathbb{D}$.\vspace{2mm}

The Schwarzian norm plays a significant role in the theory of quasiconformal mappings and Teichm\"uller space (see \cite{Lehto-1987}). A mapping \( f : \widehat{\mathbb{C}} \to \widehat{\mathbb{C}} \) of the Riemann sphere \( \widehat{\mathbb{C}} := \mathbb{C} \cup \{\infty\} \) is said to be a \( k \)-quasiconformal (\( 0 \leq k < 1 \)) mapping if it is a sense-preserving homeomorphism of \( \widehat{\mathbb{C}} \) and has locally integrable partial derivatives on \( \mathbb{C} \setminus \{f^{-1}(\infty)\} \), satisfying \( |f_{\bar{z}}| \leq k |f_z| \) almost everywhere.  On the other hand, Teichm\"uller space \( \mathcal{T} \) can be identified with the set of Schwarzian derivatives of analytic and univalent functions on \( \mathbb{D} \) that have quasiconformal extensions to \( \widehat{\mathbb{C}} \). It is known that \( \mathcal{T} \) is a bounded domain in the Banach space of analytic functions on \( \mathbb{D} \) with a finite hyperbolic sup-norm (see \cite{Lehto-1987}).  \vspace{2mm}

The Schwarzian derivative and quasiconformal mappings are connected through key results presented below.
\begin{thmC}\cite{Ahlfrors-Weill-PAMS-2012,Kühnau-MN-1971}
	If $f$ extends to a $k$-quasiconformal $(0\leq k<1)$ mapping of the Riemann share $\widehat{\mathbb{C}}$, then $||S_f||\leq 6k$. Conversely, if $||S_f||\leq 2k$, then $f$ extends to a $k$-quasiconformal mapping of the Riemann sphere $\widehat{\mathbb{C}}$.
\end{thmC}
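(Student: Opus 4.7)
The plan is to treat the two implications separately, since each corresponds to a distinct classical technique---the necessity being Kühnau's sharpening of the area theorem, and the sufficiency being the Ahlfors--Weill reflection formula.

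For the sufficiency, I would produce an explicit extension. Define $F(z)=f(z)$ for $z\in\mathbb{D}$ and, for $|z|>1$,
\[
F(z)=f(1/\bar z)+\frac{(1-|z|^2)\,f'(1/\bar z)/\bar z}{1-\tfrac{1}{2}(1-|z|^2)\bar z^{-1}\,f''(1/\bar z)/f'(1/\bar z)},
\]
i.e., reflect $f$ across $\partial\mathbb{D}$ through the osculating Möbius map at $1/\bar z$. A direct differentiation should yield
\[
\mu_F(z)=\frac{F_{\bar z}(z)}{F_z(z)}=-\tfrac{1}{2}\,\frac{(1-|z|^2)^2}{\bar z^{\,4}}\,S_f(1/\bar z),\qquad |z|>1,
\]
so the hypothesis $||S_f||\le 2k$ gives $|\mu_F|\le k$ on $\{|z|>1\}$. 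Continuity across $\partial\mathbb{D}$ is automatic from the osculation property, and injectivity follows from the measurable Riemann mapping theorem applied to the global Beltrami coefficient; together these identify $F$ as a $k$-quasiconformal extension of $f$.

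For the necessity, I would exploit Möbius invariance. Fix $z_0\in\mathbb{D}$ and compose with the disk automorphism $\varphi_{z_0}(\zeta)=(\zeta+z_0)/(1+\bar z_0\zeta)$; since $S_{\varphi_{z_0}}\equiv 0$, the chain rule recalled earlier in the excerpt yields $|S_f(z_0)|(1-|z_0|^2)^2=|S_{f\circ\varphi_{z_0}}(0)|$. The family of univalent functions on $\mathbb{D}$ admitting a $k$-quasiconformal extension is stable under such composition (after rescaling), so it suffices to bound $|S_g(0)|$ for any normalized $g(z)=z+b_2z^2+b_3z^3+\cdots$ in this family. Since $S_g(0)=6(b_3-b_2^2)$, the assertion reduces to the coefficient estimate $|b_3-b_2^2|\le k$, which is precisely Kühnau's quasiconformal refinement of the area theorem.

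The substantive obstacle is this coefficient inequality. The Ahlfors--Weill calculation, although lengthy, is a mechanical application of the quotient rule and the definition of $S_f$. The bound $|b_3-b_2^2|\le k$, by contrast, is genuinely analytic: one applies Green's theorem (or Grunsky-type inequalities) to the inverse of the extension on the complement of $g(\mathbb{D})$ and extracts the factor $k$ from a surface integral of the Beltrami coefficient, thereby sharpening the classical bound $|b_3-b_2^2|\le 1$. I would isolate this as a preliminary lemma rather than reconstruct it from scratch, since its proof is a nontrivial excursion into extremal problems in the Teichmüller space cited earlier.
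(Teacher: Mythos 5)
The paper does not actually prove this statement: Theorem C is quoted as background with citations to K\"uhnau and Ahlfors--Weill, so there is no internal argument to compare yours against. Your outline follows the standard route from exactly those sources, and both reductions are sound in structure: the necessity correctly reduces, via M\"obius invariance of $S_f$ and the identity $S_g(0)=6(b_3-b_2^2)$, to K\"uhnau's refined area-theorem bound $|b_3-b_2^2|\le k$ for normalized univalent functions admitting a $k$-quasiconformal extension; and the sufficiency is indeed the Ahlfors--Weill reflection, whose Beltrami coefficient has modulus $\tfrac12(1-|z^*|^2)^2\,|S_f(z^*)|\le\tfrac12\|S_f\|\le k$ at $z^*=1/\bar z$.

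Two points need repair. First, a sign slip: since $z-1/\bar z=-(1-|z|^2)/\bar z$, your displayed $F$ is the osculating M\"obius map of $f$ at $1/\bar z$ evaluated at the wrong increment $z^*-z$ rather than $z-z^*$; as written it does not match $f$ to first order on $|z|=1$, and the asserted formula for $\mu_F$ would not come out of the differentiation. Second, and more substantively, injectivity of $F$ is the genuinely delicate part of Ahlfors--Weill and does not simply ``follow from the measurable Riemann mapping theorem'': solving the Beltrami equation produces \emph{some} $k$-quasiconformal homeomorphism $w$ with coefficient $\mu_F$, and to conclude that $F=T\circ w$ for a M\"obius $T$ (hence that $F$ itself is a homeomorphism) one must first establish that $F$ is continuous, open and locally injective on all of $\widehat{\mathbb{C}}$ and run a monodromy argument, or else build $F$ from the solutions of $u''+\tfrac12 S_f\,u=0$ and verify the nonvanishing of the relevant Wronskian-type expression, as in Lehto's book. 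There is also the routine but necessary approximation step via $f_r(z)=f(rz)/r$, since $f$ need not extend continuously to $\partial\mathbb{D}$ a priori. None of this changes the architecture of your argument, but these are precisely the places where the content of the theorem lives, so they cannot be waved through.
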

Regarding to the estimates of the Schwarzian norm for the subclasses of univalent functions \emph{i.e.,} of functions $f$ that satisfy:
\begin{align*}
	\bigg|\arg\left(\frac{zf^{\prime}(z)}{f(z)}\right)\bigg|<\alpha\frac{\pi}{2},\; z\in\mathbb{D},
\end{align*}
where $0\leq \alpha<1$. In $1996$, Suita \cite{Suita-JHUED-1996} studied the class $\mathcal{C(\alpha)}$, $0\leq \alpha<1$ and using the integral representation of functions in $\mathcal{C}(\alpha)$ proved that the Schwarzian norm satisfies the sharp inequality 
\begin{align*}
	||S_f||\leq\begin{cases}
		2,\;\;\;\;\;\;\;\;\;\;\;\;\;\;\;\;\; \mbox{if}\; 0\leq \alpha\leq 1/2,\\
		8\alpha(1-\alpha),\;\;\;\;\mbox{if}\; 1/2<\alpha<1.
	\end{cases}
\end{align*}
\;\;\;\;For a constant $\beta\in (-\pi/2, \pi/2)$, a function $f\in\mathcal{A}$ is called $\beta$-spiral like if $f$ is univalent on $\mathbb{D}$ and for any $z\in\mathbb{D}$, the $\beta$-logarithmic spiral $\{f(z)\exp\left(-e^{i\beta}t\right);\; t\geq 0\}$ is contained in $f(\mathbb{D})$. It is equivalent to the condition that ${\rm Re} \left(e^{-i\beta}zf^{\prime}(z)/f(z)\right)>0$ in $\mathbb{D}$ and we denote by $\mathcal{SP}(\beta)$, the set of all $\beta$-spiral like functions. Okuyama \cite{Okuyama-CVTA-2000} give the best possible estimate of the norm of pre-Schwarzian derivatives for the class $\mathcal{SP}(\beta)$.\vspace{2mm}

A function $f\in\mathcal{A}$ is said to be uniformly convex function if every circular arc (positively oriented) of the form $\{z\in\mathbb{D} : |z-\eta|=r\}$, $\eta\in\mathbb{D}$, $0<r<|\eta|+1$ is mapped by $f$ univalently onto a convex arc. The class of all uniformly convex functions is denoted by $\mathcal{UCV}$. In particular, $\mathcal{UCV}\subset \mathcal{K}$. It is well-known that (see \cite{Goodman-APM-1991}) a function $f\in\mathcal{A}$ is uniformly convex if, and only if, 
\begin{align*}
	{\rm Re}\left(1+\frac{z f^{\prime\prime}(z)}{f^{\prime}(z)}\right)>\bigg|\frac{z f^{\prime\prime}(z)}{f^{\prime}(z)}\bigg|^2\; \mbox{for}\; z\in\mathbb{D}.
\end{align*}
\;\;\;\;In \cite{Kanas-Sugawa-APM-2011}, Kanas and Sugawa established that the Schwarzian norm satisfies \( ||S_f|| \leq 8/\pi^2 \) for all \( f \in \mathcal{UCV} \), with the bound being sharp.  Recently, Schwarzian norm estimates for other subclasses of univalent functions have been gradually studied by many people, such as concave function class \cite{Bhowmik-Wriths-CM-2012}, Robertson class \cite{Ali-Pal-BDS-2023} and other univalent analytic subclasses [6]. Therefore, by using the pre-Schwarzian  and Schwarzian norms to study the univalence and quasiconformal extension problems of analytic function arouse a new wave of research interest.\vspace{2mm}

	A domain $\Omega$ containing the origin is called $\alpha$-spirallike if for each point $\omega_0$ in $\Omega$ the arc of the $\alpha$-spiral from the origin to the point $\omega_0$ entirely lies in $\Omega$. A function $f\in\mathcal{A}$ is said to be an $\alpha$-spirallike if
	\begin{align*}
		{\rm Re}\left(e^{i\alpha}\frac{z f^{\prime}(z)}{f(z)}\right)>0\;\mbox{for}\; z\in\mathbb{D},
	\end{align*}
	where $|\alpha|<\pi/2$. In $1933$, $\check{S}$pa$\check{c}$ek (see \cite{Spacek-CPMF-1933}) introduced and studied the class of $\alpha$-spirallike functions and this class is denoted by $\mathcal{SP}(\alpha)$. Later on, Robertson \cite{Robertson-MMJ-1969} introduced a new class of functions, denoted by $\mathcal{S}_\alpha$, in connection with $\alpha$-spirallike functions. A function $f\in\mathcal{A}$ is in the class $\mathcal{S}_\alpha$ if and only if
	\begin{align*}
		{\rm Re}\left(e^{i\alpha}\left(1+\frac{z f^{\prime\prime}(z)}{f^{\prime}(z)}\right)\right)>0\;\mbox{for}\; z\in\mathbb{D}.
	\end{align*}
	
	\;\;\;\; Let us introduce one of the most important and useful tool known as differential subordination technique. In geometric function theory, many problems can be solved in a simple and sharp manner with the help of differential subordination. A function $f\in\mathcal{H}$ is said to be subordinate to another function $g\in\mathcal{H}$ if there exists an analytic function $\omega : \mathbb{D}\to\mathbb{D}$ with $\omega(0)=0$ such that $f(z)=g(\omega(z))$ and it is denoted by $f\prec g$. Moreover, when $g$ is univalent, then $f\prec g$ if, and only if, $f(0)=g(0)$ and $f(\mathbb{D}\subset g(\mathbb{D})$. In terms of subordination, the class $\mathcal{SP_\alpha}(\beta)$ can be defined in the following form
	\begin{align}\label{Eq-1.2}
		f\in\mathcal{SP_\alpha}(\beta)\iff1+\frac{zf^{\prime\prime}(z)}{f^{\prime}(z)}\prec\frac{1+Az}{1-z}
	\end{align}
	where, $A=e^{-\iota\alpha}(e^{-\iota\alpha}-2\beta\cos\alpha)$.\vspace*{2mm}
	
	In this paper, we establish various geometric properties of functions in the class $\mathcal{SP_\alpha}(\beta)$, including growth and distortion theorems, and examine the sharpness of these results. Furthermore, we determine the sharp estimates for the Schwarzian and pre-Schwarzian norms of functions in this class. Finally, addressing a significant problem in geometric function theory, we determine the sharp radius of concavity and radius of convexity for the class $\mathcal{SP_\alpha}(\beta)$. The manuscript is organized into two sections. In Section \ref{Sec-2}, we present all relevant results, including bounds for the Schwarzian and pre-Schwarzian derivatives, norms, the growth-distortion theorem, and the sharp bounds for the class $\mathcal{SP_\alpha}(\beta)$. In Section \ref{Sec-3}, we determine the radius of concavity and the radius of convexity for functions in the class $\mathcal{SP_\alpha}(\beta)$. The detailed proofs of the main results are discussed in each respective section.
	
\section{\bf{Pre-Schwarzian and Schwarzian norm estimates for Robertson class}}\label{Sec-2} In \cite{Chuaqui-Duren-Osgood-AASFM-2011}, Chuaqui \emph{et. al.} proved a result by applying the Schwarz-Pick lemma and the fact that the expression $1+z(f^{\prime\prime}/f^{\prime})(z)$ is subordinate to the half-plan mapping $\ell(z)=(1+z)/(1-z)$, which is 
\begin{align}\label{Eq-1.3}
	1+\frac{zf^{\prime\prime}(z)}{f^{\prime}(z)}=\ell(w(z))=\frac{1+w(z)}{1-w(z)}
\end{align}
for some function $w : \mathbb{D}\to\mathbb{D}$ holomorphic and such that $w(0)=0$. \vspace{2mm}

The expression which is defined in \eqref{Eq-1.3} allowed us to obtain other characterizations for the convex functions: 
\begin{align}\label{Eq-22.3}
	f\in\mathcal{C}\; \mbox{if, and only if,}\; {\rm Re}\left(1+\frac{zf^{\prime\prime}(z)}{f^{\prime}(z)} \right)\geq \frac{1}{4}\left( 1-|z|^2\right)\bigg| \frac{f^{\prime\prime}(z)}{f^{\prime}(z)}  \bigg|^2,
\end{align}
and 
\begin{align}\label{Eq-22.4}
	f\in\mathcal{C}\; \mbox{if, and only if,}\; \bigg|\left(1-|z|^2\right)\frac{f^{\prime\prime}(z)}{f^{\prime}(z)} -2\bar{z} \bigg|\leq 2,
\end{align}
for all $z\in\mathbb{D}$.\vspace{2mm}

In this section, inspired the article \cite{Wang-Li-Fan-MM-2024,Carrasco-Hernández-AMP-2023}, we firstly give the equivalent characterization of the class $\mathcal{SP}^{0}_\alpha(\beta)$ (Robertson class), next we present the distortion and growth theorem, and then we derive the results of pre-Schwarzian and Schwarzian norms for the class of $\mathcal{SP}^{0}_\alpha(\beta)$ (Robertson class) in terms of the value of $f^{\prime\prime}(0)$. We define $G_j(\alpha, \beta)$ $(j=1, 2)$ as follows:
\begin{align*}
	\begin{cases}
		G_1(\alpha, \beta):=\dfrac{\left(e^{-i\alpha}\left(e^{-i\alpha}-2\beta\cos\alpha\right)+1\right)}{2},\vspace{2mm}\\
			G_2(\alpha, \beta):=\dfrac{1-|z|^2}{\left(e^{-i\alpha}\left(e^{-i\alpha}-2\beta\cos\alpha\right)+1\right)}
	\end{cases}
\end{align*}
and obtain the result.
\begin{theorem}\label{Th-2.1} For $-\pi/2<\alpha<\pi/2$ and $0\leq\beta<1$ the following are equivalent:
	\begin{enumerate}
		\item[\emph{(iii)}] $f\in\mathcal{SP}^{0}_\alpha(\beta)$\vspace{2mm}
		
		\item[\emph{(ii)}] 
		\begin{align}\label{Eq-2.1A}
			{\rm Re}&\left(1+\frac{1}{2}\left((e^{2i\alpha}-2\beta e^{i\alpha}\cos\alpha)+1\right)\frac{zf^{\prime\prime}(z)}{f^{\prime}(z)}\right)\\&\geq1-(1-\beta)^2\cos^2\alpha+\left(\frac{1-|z|^2}{4}\right)\bigg|\frac{zf^{\prime\prime}(z)}{f^{\prime}(z)}\bigg|^2\nonumber
		\end{align}
		
		\item[\emph{(iii)}]
		\begin{align}\label{Eq-2.2A}
		\bigg|	(1-|z|^2)\left(\frac{f^{\prime\prime}(z)}{f^{\prime}(z)}\right)-2(1-\beta)\cos\alpha\bar{z}\bigg|\leq (1-\beta)\cos\alpha.
		\end{align}
	\end{enumerate}
	The inequalities \emph{(ii)} and \emph{(iii)} both are sharp for the function
	\begin{align}\label{Eq-2.3A}
		f^{\prime}_{\alpha,\beta}(z)=\frac{1}{(1-z)^{(1-\beta)\cos\alpha}}\; \mbox{for}\; z\in\mathbb{D}\; \mbox{with}\;\beta\in[0,1).
	\end{align}
\end{theorem}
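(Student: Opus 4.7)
The plan is to leverage the subordination characterization \eqref{Eq-1.2}, reducing the three assertions to a single statement about the associated Schwarz function $w\colon\mathbb{D}\to\mathbb{D}$ with $w(0)=0$. This mirrors the classical derivation of the convex identities \eqref{Eq-22.3}--\eqref{Eq-22.4}, which is exactly the case $\alpha=\beta=0$. First I would use \eqref{Eq-1.2} to write $1+zf''(z)/f'(z) = (1+Aw(z))/(1-w(z))$ and subtract $1$ from both sides, obtaining
\[
\frac{zf''(z)}{f'(z)} \;=\; \frac{B\,w(z)}{1-w(z)}, \qquad B := 1+A = 2(1-\beta)e^{-i\alpha}\cos\alpha,
\]
so that $|B|=2(1-\beta)\cos\alpha$, and both (ii) and (iii) become inequalities about the Schwarz function $w$.

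For the equivalence with (iii), I would substitute the above expression for $f''/f'$ into $(1-|z|^2)f''(z)/f'(z)$ and combine with the $\bar z$ term over the common denominator $z(1-w(z))$. Using $\bar z\cdot z = |z|^2$, the algebra telescopes to an identity of the shape
\[
(1-|z|^2)\frac{f''(z)}{f'(z)} - (\text{const})\,\bar z \;=\; \frac{B\bigl(w(z)-|z|^2\bigr)}{z\,(1-w(z))}.
\]
Taking moduli, (iii) reduces to $|w(z)-|z|^2| \leq |z|\,|1-w(z)|$, and a direct squaring collapses this to $|w(z)|^2(1-|z|^2) \leq |z|^2(1-|z|^2)$, i.e.\ to the Schwarz bound $|w(z)|\leq|z|$. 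Every step is reversible, so (i)$\Leftrightarrow$(iii). For (ii), I would square (iii) and expand $|X - Y|^2 = |X|^2 - 2\mathrm{Re}(X\bar Y) + |Y|^2$ with $X = (1-|z|^2)f''/f'$ and $Y$ the $\bar z$ term. Dividing through by $(1-|z|^2)$ and rearranging, the cross term produces $\tfrac{1}{2}\mathrm{Re}(\bar B \cdot zf''/f')$ on the left, which matches the stated left-hand side of (ii) upon the identification $\bar B/2 = \tfrac{1}{2}(e^{2i\alpha}-2\beta e^{i\alpha}\cos\alpha+1)$, while the constant shift $|B|^2/4 = (1-\beta)^2\cos^2\alpha$ yields the $1-(1-\beta)^2\cos^2\alpha$ on the right.

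Sharpness in (ii) and (iii) is witnessed by $w(z)=z$, the extremal of Schwarz's lemma. Integrating $zf''/f' = Bz/(1-z)$ with $f'(0)=1$ yields $f'(z)=(1-z)^{-B}$, which is the extremal function \eqref{Eq-2.3A}; the inequality $|w-|z|^2|\leq|z||1-w|$ becomes equality on the real diameter, propagating equality to (ii) and (iii). The main obstacle will be the careful bookkeeping of the complex coefficient $B$ through the algebra, and in particular verifying that squaring (iii) genuinely recovers (ii) with the advertised constants rather than weaker or stronger ones. The hypothesis $-\pi/2<\alpha<\pi/2$ ensures $\cos\alpha>0$, so $B\neq 0$ and the M\"obius map $(1+Az)/(1-z)$ is univalent, justifying the subordination step.
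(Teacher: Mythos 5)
Your proposal is correct and follows essentially the same route as the paper: both pass to the Schwarz function $w$ via the subordination \eqref{Eq-1.2}, use $|1+A|=2(1-\beta)\cos\alpha$ together with the Schwarz bound $|w(z)|\le|z|$, and obtain (ii) and (iii) by expanding and completing the square in the resulting modulus inequality, with the same extremal function. The only discrepancy works in your favour: carried out carefully, your identity yields $\bigl|(1-|z|^2)f''(z)/f'(z)-2(1-\beta)e^{-i\alpha}\cos\alpha\,\bar z\bigr|\le 2(1-\beta)\cos\alpha$, which at $\alpha=\beta=0$ reduces to the known characterization \eqref{Eq-22.4} with bound $2$; the printed right-hand side $(1-\beta)\cos\alpha$ in \eqref{Eq-2.2A} (and the bound $1$ in Corollary \ref{Cor-2.2}) is a constant-factor slip in the paper's own computation, not a gap in your argument.
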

\begin{proof}[\bf Proof of Theorem \ref{Th-2.1}]
For $-\pi/2<\alpha<\pi/2$ and $0\leq \beta<1$ let  $f\in\mathcal{SP}^{0}_\alpha(\beta)$ be of the form \eqref{Eq-1.1}. Then from \eqref{Eq-1.2}, we have
\begin{align}\label{Eq-2.1}
	1+\frac{zf^{\prime\prime}(z)} {f^{\prime}(z)}\prec\frac{1+Az}{1-z},\;\;\mbox{where}\;A=e^{-i\alpha}\left(e^{-i\alpha}-2\beta\cos\alpha\right).
\end{align}
Consequently, there exists an analytic function $\omega: \mathbb{D}\rightarrow\mathbb{D}$ with $\omega(0)=0$ such that
     \begin{align*}
			1+\frac{zf^{\prime\prime}(z)} {f^{\prime}(z)}=\frac{1+A\omega(z)}{1-\omega(z)}.
			\end{align*}
 Let $\omega(z)=z\phi(z)$ for some analytic function $\phi$ that satisfy $\phi(\mathbb{D})\subseteq\mathbb{D}$. From \eqref{Eq-2.1}, we have
	\begin{align}\label{Eq-2.3}
		\frac{f^{\prime\prime}(z)}{f^{\prime}(z)}=\frac{2G_1(\alpha, \beta)\omega(z)}{z(1-\omega(z))}
	\end{align}
	which can be written as
	\begin{align*}
		\frac{f^{\prime\prime}(z)}{f^{\prime}(z)}=\frac{2G_1(\alpha, \beta)\;\phi(z)}{(1-z\phi(z))}
	\end{align*}
	Thus, we see that
	\begin{align}\label{Eq-2.4}
		\phi(z)=\frac{\frac{f^{\prime\prime}(z)}{f^{\prime}(z)}}{2G_1(\alpha, \beta)+\frac{zf^{\prime\prime}(z)}{f^{\prime}(z)}}.
	\end{align}
	Since $|\phi(z)|^2\leq1$, an easy computation shows that
	\begin{align}\label{Eq-4.5}
		\bigg|\frac{f^{\prime\prime}(z)}{f^{\prime}(z)}\bigg|^2\leq\left(2G_1(\alpha, \beta)+\frac{zf^{\prime\prime}(z)}{f^{\prime}(z)}\right)\overline{\left(2G_1(\alpha, \beta)+\frac{zf^{\prime\prime}(z)}{f^{\prime}(z)}\right)}.
	\end{align}
	A simple computation shows that
	\begin{align}\label{Eq-2.6}
		(1-|z|^2)\bigg|\frac{f^{\prime\prime}(z)}{f^{\prime}(z)}\bigg|^2\leq(1-\beta)^2\cos^2\alpha+4{\rm Re}\left(\frac{1}{2}\left((e^{2i\alpha}-2\beta e^{i\alpha}\cos\alpha)+1\right)\frac{zf^{\prime\prime}(z)}{f^{\prime}(z)}\right)
	\end{align}
	which is equivalent to
	\begin{align*}
		{\rm Re}&\left(\frac{1}{2}\left((e^{2i\alpha}-2\beta e^{i\alpha}\cos\alpha)+1\right)\frac{zf^{\prime\prime}(z)}{f^{\prime}(z)}\right)\\&\geq-(1-\beta)^2\cos^2\alpha+\left(\frac{1-|z|^2}{4}\right)\bigg|\frac{zf^{\prime\prime}(z)}{f^{\prime}(z)}\bigg|^2
	\end{align*}
	We rewrite the last expression as
	\begin{align}\label{Eq-2.7}
		{\rm Re}&\left(1+\frac{1}{2}\left((e^{2i\alpha}-2\beta e^{i\alpha}\cos\alpha)+1\right)\frac{zf^{\prime\prime}(z)}{f^{\prime}(z)}\right)\\&\geq 1-(1-\beta)^2\cos^2\alpha+\left(\frac{1-|z|^2}{4}\right)\bigg|\frac{zf^{\prime\prime}(z)}{f^{\prime}(z)}\bigg|^2.\nonumber
	\end{align}
	Multiplying both sides of equation $\eqref{Eq-2.6}$ by $(1-|z|^2)$, we obtain
	\begin{align}
		&(1-|z|^2)^2\bigg|\frac{f^{\prime\prime}(z)}{f^{\prime}(z)}\bigg|^2\\&\leq(1-|z|^2)(1-\beta)^2\cos^2\alpha+4(1-|z|^2){\rm Re}\left(\frac{1}{2}\left((e^{2i\alpha}-2\beta e^{i\alpha}\cos\alpha)+1\right)\frac{zf^{\prime\prime}(z)}{f^{\prime}(z)}\right)\nonumber
	\end{align}
	which implies that 
	\begin{align*}
		(1-|z|^2)^2&\bigg|\frac{f^{\prime\prime}(z)}{f^{\prime}(z)}\bigg|^2-4(1-|z|^2){\rm Re}\left(\frac{1}{2}\left((e^{2i\alpha}-2\beta e^{i\alpha}\cos\alpha)+1\right)\frac{zf^{\prime\prime}(z)}{f^{\prime}(z)}\right)\\&\quad+|z|^2(1-\beta)^2\cos^2\alpha\leq(1-\beta)^2\cos^2\alpha.
	\end{align*}
	Thus, we have
	\begin{align}\label{Eq-2.8}
	\bigg|	(1-|z|^2)\left(\frac{f^{\prime\prime}(z)}{f^{\prime}(z)}\right)-2(1-\beta)\cos\alpha\bar{z}\bigg|\leq (1-\beta)\cos\alpha.
	\end{align}
	This completes the proof.
\end{proof}

\begin{example}
	For the sharpness of the inequalities \eqref{Eq-2.1A} and \eqref{Eq-2.2A}, we consider the function defined in \eqref{Eq-2.3A} with $\beta=0$ as 
	\begin{align*}
		f^{\prime}_{0}(z)=\frac{1}{(1-z)}
	\end{align*}
	A simple computation using \eqref{Eq-2.3A} shows that
	\begin{align*}
		1+\frac{zf_0^{\prime\prime}(z)}{f_0^{\prime}(z)}=1+\frac{z}{1-z}.
	\end{align*}
	Moreover, it is easy to see that
	\begin{align*}
		{\rm 	Re}\left(1+\frac{zf_{0}^{\prime\prime}(z)}{f_{0}^{\prime}(z)}\right)>0,
	\end{align*}
	hence, it is clear that $f_{0}\in\mathcal{SP}^{0}_0(0)$.\vspace{2mm}
	
	To show the inequality \eqref{Eq-2.13A} of Corollary \ref{Cor-2.2} is sharp, we consider $z=r<1$ and establish that 
	\begin{align*}
		\bigg|	(1-|z|^2)\left(\frac{f_{0}^{\prime\prime}(z)}{f_{0}^{\prime}(z)}\right)-\bar{z}\bigg|=\bigg|	(1-r^2)\left(\frac{1}{1-r}\right)-r\bigg|=|1-r+r|=1.
	\end{align*}
	
	\noindent To show the inequality \eqref{Eq-2.13A} in Corollary \ref{Cor-2.1} is sharp, we see from \eqref{Eq-2.4} (Proof of Theorem \ref{Th-2.1}) that 
	\begin{align}\label{Eq-22.44}
		\phi(z)=\frac{\frac{f_{0}^{\prime\prime}(z)}{f_{0}^{\prime}(z)}}{\frac{zf_{0}^{\prime\prime}(z)}{f_{0}^{\prime}(z)}+2}=1.
	\end{align}
	Thus, it is clear that $|\phi(z)|^2=1$ which further leads to 
	\begin{align*}
		{\rm Re  }\left(1+\frac{zf_{1/2}^{\prime\prime}(z)}{f_{1/2}^{\prime}(z)}\right)=\left(\frac{1-|z|^2}{4}\right)\bigg|\frac{zf_1^{\prime\prime}(z)}{f_1^{\prime}(z)}\bigg|^2.
	\end{align*}
\end{example}
We have the following immediate results form Theorem \ref{Th-2.1}.
\begin{corollary}\label{Cor-2.1}
If $\alpha=\beta=0$, $f\in \mathcal{SP}^{0}_0(0)\subset\mathcal{C}$, then from  \eqref{Eq-2.7} we have 
	\begin{align}\label{Eq-2.13A}
	{\rm Re}\left(1+\frac{zf^{\prime\prime}(z)}{f^{\prime}(z)}\right)\geq\frac{1}{4}(1-|z|^2)\bigg|\frac{zf^{\prime\prime}(z)}{f^{\prime}(z)}\bigg|^2.
	\end{align}
\end{corollary}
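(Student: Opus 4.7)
The plan is to obtain the claimed inequality as an immediate specialization of Theorem~\ref{Th-2.1}, specifically the intermediate inequality \eqref{Eq-2.7}, to the parameter values $\alpha = 0$ and $\beta = 0$. First I would verify that with these choices the defining condition of $\mathcal{SP}^{0}_\alpha(\beta)$ collapses to the classical convexity requirement ${\rm Re}\left(1 + \frac{zf^{\prime\prime}(z)}{f^{\prime}(z)}\right) > 0$, so that $\mathcal{SP}^{0}_0(0) \subset \mathcal{C}$; this justifies the statement of the corollary and ensures that \eqref{Eq-2.7} is applicable to every $f$ in the subclass.

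Next I would substitute $\alpha = 0$ and $\beta = 0$ into the multiplier on the left-hand side of \eqref{Eq-2.7}. Since $e^{2i\alpha} = 1$ and $\cos\alpha = 1$, the coefficient $\frac{1}{2}\bigl((e^{2i\alpha} - 2\beta e^{i\alpha}\cos\alpha) + 1\bigr)$ reduces to $\frac{1}{2}(1 + 1) = 1$, so the left-hand side becomes precisely ${\rm Re}\left(1 + \frac{zf^{\prime\prime}(z)}{f^{\prime}(z)}\right)$. On the right-hand side, the constant term $1 - (1-\beta)^2\cos^2\alpha$ vanishes to $0$ under the same substitution, leaving only $\frac{1-|z|^2}{4}\bigl|\frac{zf^{\prime\prime}(z)}{f^{\prime}(z)}\bigr|^2$, which yields \eqref{Eq-2.13A} exactly.

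Since the argument is a pure substitution with no estimate to sharpen and no ancillary lemma to invoke, I do not anticipate any genuine obstacle. The only point that warrants care is confirming that the collapse $(1-\beta)^2\cos^2\alpha = 1$ precisely cancels the additive $1$ on the right-hand side of \eqref{Eq-2.7}; this is what produces the clean convex-function inequality and recovers a well-known characterization of the class $\mathcal{C}$ (compare with \eqref{Eq-22.3}).
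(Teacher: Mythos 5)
Your proposal is correct and coincides with the paper's own (implicit) argument: the corollary is obtained purely by substituting $\alpha=\beta=0$ into \eqref{Eq-2.7}, whereupon the multiplier $\tfrac{1}{2}\bigl((e^{2i\alpha}-2\beta e^{i\alpha}\cos\alpha)+1\bigr)$ becomes $1$ and the constant $1-(1-\beta)^2\cos^2\alpha$ vanishes. Nothing further is needed.
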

We obtain the following corollary for the class $\mathcal{C}$.
\begin{corollary}\label{Cor-2.2}
	If $\alpha=\beta=0$, $f\in \mathcal{SP}^{0}_0(0)\subset\mathcal{C}$, then from  \eqref{Eq-2.8} we have 
	\begin{align*}
		\bigg|(1-|z|^2)\frac{f^{\prime\prime}(z)}{f^{\prime}(z)}-2\bar{z}\bigg|\leq1.
	\end{align*}
\end{corollary}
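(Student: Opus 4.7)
The plan is to obtain the inequality as a direct specialization of \eqref{Eq-2.8}, the equivalent form of Theorem \ref{Th-2.1}(iii), at the parameter values $\alpha=0$ and $\beta=0$. Since the analytic content is fully absorbed in the proof of Theorem \ref{Th-2.1}, the task at the corollary level reduces to two short bookkeeping steps.

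First, I would verify the inclusion $\mathcal{SP}^{0}_0(0)\subset\mathcal{C}$ that is recorded in the statement. Setting $\alpha=\beta=0$ in the defining condition of $\mathcal{SP_\alpha}(\beta)$ recalled in the abstract collapses it to $\mathrm{Re}(1+zf''(z)/f'(z))>0$ for $z\in\mathbb{D}$, which is the classical analytic characterization of convex univalent functions. Hence every $f\in\mathcal{SP}^{0}_0(0)$ is convex, and in particular the conclusion of Theorem \ref{Th-2.1}(iii) is available for such $f$.

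Second, I would specialize the inequality
\begin{align*}
\bigg|(1-|z|^2)\frac{f''(z)}{f'(z)}-2(1-\beta)\cos\alpha\,\bar z\bigg|\leq(1-\beta)\cos\alpha
\end{align*}
from \eqref{Eq-2.8} at $\alpha=\beta=0$. The scalar factor $(1-\beta)\cos\alpha$ collapses to $1$, so the coefficient of $\bar z$ on the left becomes $2$ and the bound on the right becomes $1$. The inequality therefore reduces to
\begin{align*}
\bigg|(1-|z|^2)\frac{f''(z)}{f'(z)}-2\bar z\bigg|\leq 1,
\end{align*}
which is exactly the stated conclusion. No substantive obstacle is anticipated beyond correctly tracking the arithmetic simplification of $(1-\beta)\cos\alpha$ at the chosen parameter values; the main work is entirely contained in Theorem \ref{Th-2.1}, and the corollary is purely a specialization.
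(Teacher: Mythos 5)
Your two bookkeeping steps reproduce exactly what the paper does for Corollary \ref{Cor-2.2} --- the paper offers no argument beyond substituting $\alpha=\beta=0$ into \eqref{Eq-2.8} --- and your arithmetic ($(1-\beta)\cos\alpha$ collapsing to $1$) is carried out correctly. The genuine problem is one you inherit from the paper: inequality \eqref{Eq-2.8} is false with the constant $(1-\beta)\cos\alpha$ on the right-hand side, so the corollary as stated cannot be proved by this (or any) route. Note first the internal inconsistency: \eqref{Eq-22.4} records the classical characterization $f\in\mathcal{C}$ if and only if $\left|(1-|z|^2)f''(z)/f'(z)-2\bar z\right|\le 2$, with constant $2$, while the corollary claims the constant $1$ for the subclass with $f''(0)=0$. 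A concrete counterexample is the strip map $f(z)=\tfrac12\log\frac{1+z}{1-z}$, which lies in $\mathcal{SP}^{0}_0(0)$ since $1+zf''(z)/f'(z)=(1+z^2)/(1-z^2)$ has positive real part and $f''(0)=0$; yet at $z=ir$ one computes
\begin{align*}
\bigg|(1-|z|^2)\frac{f''(z)}{f'(z)}-2\bar z\bigg|=\frac{4r}{1+r^2}\longrightarrow 2\quad (r\to 1^-),
\end{align*}
which exceeds $1$ as soon as $r>2-\sqrt{3}$.

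The source of the error lies in the proof of Theorem \ref{Th-2.1}, on which both you and the paper rely. In passing from \eqref{Eq-4.5} to \eqref{Eq-2.6}, the term $|2G_1(\alpha,\beta)|^2=4(1-\beta)^2\cos^2\alpha$ is recorded as $(1-\beta)^2\cos^2\alpha$ (a factor of $4$ is dropped), and in the subsequent completing-the-square step the quantity added to both sides should be $4(1-\beta)^2\cos^2\alpha\,|z|^2=|2G_1(\alpha,\beta)\bar z|^2$ rather than $(1-\beta)^2\cos^2\alpha\,|z|^2$; moreover the cross term produced is ${\rm Re}(\overline{G_1(\alpha,\beta)}\,z f''/f')$, so the vector subtracted must be $2G_1(\alpha,\beta)\bar z=2(1-\beta)\cos\alpha\, e^{-i\alpha}\bar z$, not $2(1-\beta)\cos\alpha\,\bar z$. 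With these repairs, \eqref{Eq-2.8} becomes
\begin{align*}
\bigg|(1-|z|^2)\frac{f''(z)}{f'(z)}-2(1-\beta)\cos\alpha\, e^{-i\alpha}\bar z\bigg|\le 2(1-\beta)\cos\alpha,
\end{align*}
and your specialization at $\alpha=\beta=0$ then yields the constant $2$, in agreement with \eqref{Eq-22.4} and saturated by the strip map above. So before declaring the corollary ``purely a specialization,'' you should have sanity-checked the specialized claim against \eqref{Eq-22.4}, which sits three displays earlier in the same section; that check immediately reveals that the constant $1$ cannot be right, and that the correct conclusion obtainable by your method carries the constant $2$.
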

\begin{remark}
	The inequality 
	\begin{align*}
		\bigg|	(1-|z|^2)\left(\frac{f^{\prime\prime}(z)}{f^{\prime}(z)}\right)-2(1-\beta)\cos\alpha\bar{z}\bigg|\leq (1-\beta)\cos\alpha.
	\end{align*}
	is instrumental in the definition or characterization of the radius of concavity.
\end{remark}
In the next result, we establish the distortion theorem and growth theorem for the functions in the class $\mathcal{SP_\alpha}^0(\beta)=\{f\in\mathcal{SP_\alpha}(\beta):f^{\prime\prime}(0)=0\}$.
\begin{theorem}\label{Th-2.2}
	For $-\pi/2<\alpha<\pi/2$ and $0\leq\beta<1$, let $f\in \mathcal{SP}^{0}_\alpha(\beta)$ be of the form \eqref{Eq-1.1} for all $z\in\mathbb{D}$, then the inequality
	\begin{align*}
		\frac{1}{(1+|z|^2)^{(1-\beta)\cos\alpha}}\leq|f^{\prime}(z)|\leq\frac{1}{(1-|z|^2)^{(1-\beta)\cos\alpha}}
	\end{align*}
	and
	\begin{align*}
			\int_{0}^{|z|}\frac{1}{(1+\xi^2)^{(1-\beta)\cos\alpha}} d|\xi|\leq	|f(z)|\leq\int_{0}^{|z|}\frac{1}{(1-\xi^2)^{(1-\beta)\cos\alpha}} d|\xi|.
	\end{align*}
	All of these estimates are sharp. Equality holds at a given point other than $0$ for 
	\begin{align*}
		f(z)=\int_{0}^{|z|}\frac{1}{(1-\lambda \zeta^2)^{(1-\beta)\cos\alpha}}d|\zeta|
	\end{align*} for some $\lambda\in\mathbb{C}$ and $|\lambda|=1$.
\end{theorem}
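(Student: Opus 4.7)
Reduce the problem to a Schwarz-type estimate, sharpen Schwarz's lemma via $f''(0)=0$, and integrate along the radial segment $[0,z]$.

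\textbf{Step 1 (Schwarz upgrade).} From the derivation leading to \eqref{Eq-2.3} in the proof of Theorem~\ref{Th-2.1}, there is an analytic $\omega:\mathbb{D}\to\mathbb{D}$ with $\omega(0)=0$ such that
\[
\frac{f''(z)}{f'(z)} \;=\; \frac{2G_{1}(\alpha,\beta)\,\omega(z)}{z(1-\omega(z))},\qquad |2G_{1}(\alpha,\beta)|=|1+A|=2(1-\beta)\cos\alpha.
\]
Comparing Taylor coefficients at the origin (and using $1+A\ne 0$), the hypothesis $f''(0)=0$ is equivalent to $\omega'(0)=0$. Schwarz's lemma applied to $\omega(z)/z$ then sharpens this to $|\omega(z)|\le|z|^{2}$, and the bound $|1-\omega(z)|\ge 1-|z|^{2}$ gives the pointwise estimate
\[
\Bigl|\frac{f''(z)}{f'(z)}\Bigr|\;\le\; \frac{2(1-\beta)\cos\alpha\,|z|}{1-|z|^{2}}.
\]

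\textbf{Step 2 (distortion and growth).} Integrating $(\log f'(z))'=f''/f'$ along the radial segment from $0$ to $z$ and applying the triangle inequality gives $|\log f'(z)|\le -(1-\beta)\cos\alpha \log(1-|z|^{2})$. Taking real parts yields the upper distortion bound $|f'(z)|\le (1-|z|^{2})^{-(1-\beta)\cos\alpha}$. For the sharp lower bound, I would pass to the associated Carath\'eodory function $p(z)=(e^{i\alpha}(1+zf''/f')-i\sin\alpha-\beta\cos\alpha)/((1-\beta)\cos\alpha)\in\mathcal{P}$ (satisfying $p(0)=1$ and, from $f''(0)=0$, $p'(0)=0$) and use its Herglotz representation $p(z)=\int(1+ze^{-it})/(1-ze^{-it})\,d\mu(t)$ with $\int d\mu=1$, $\int e^{-it}d\mu=0$; this recasts $\log f'(z) = -2(1-\beta)\cos\alpha\,e^{-i\alpha}\int\log(1-ze^{-it})\,d\mu(t)$, and the second-moment identity $\int|1-ze^{-it}|^{2}d\mu=1+|z|^{2}$ together with Jensen's inequality for $\log$ produces the claimed lower bound. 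The growth estimates on $|f|$ then follow by integrating the distortion bounds along $[0,z]$, using the standard minimum-principle argument for the lower direction.

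\textbf{Sharpness and main obstacle.} Sharpness at a prescribed $z$ is verified by taking $\omega(\zeta)=\lambda\zeta^{2}$ with $|\lambda|=1$ chosen so that $\lambda z^{2}$ is real positive along the ray from $0$ to $z$, which realises equality throughout and reproduces the stated extremal $f(z)=\int_{0}^{|z|}(1-\lambda\zeta^{2})^{-(1-\beta)\cos\alpha}d|\zeta|$. The delicate step---and the main obstacle---is the sharp lower bound for $|f'|$: the triangle inequality alone only yields the weaker $(1-|z|^{2})^{(1-\beta)\cos\alpha}$, and the factor $(1+|z|^{2})$ appears only through the second-moment identity, which in turn rests on the vanishing first moment $\int e^{-it}d\mu=0$---that is, on the hypothesis $f''(0)=0$ itself.
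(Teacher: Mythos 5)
Your Step~1, the upper distortion bound, and both growth estimates are correct, and they run parallel to the paper's own proof, which likewise starts from \eqref{Eq-2.3}, applies the Schwarz lemma to $\phi$, and integrates $\frac{\partial}{\partial r}\log|f^{\prime}(re^{i\theta})|$ radially. For $\alpha=0$ your Herglotz--Jensen argument for the lower bound is complete and sharp (the vanishing first moment coming from $f^{\prime\prime}(0)=0$ gives $\int|1-ze^{-it}|^2\,d\mu=1+|z|^2$, and Jensen does the rest), and it is cleaner than the paper's completing-the-square computation. The genuine gap is exactly the point you flag as the main obstacle, namely the lower bound for $\alpha\neq0$: taking real parts in your identity $\log f^{\prime}(z)=-2(1-\beta)\cos\alpha\,e^{-i\alpha}I(z)$, where $I(z)=\int\log(1-ze^{-it})\,d\mu(t)$, gives
\begin{align*}
\log|f^{\prime}(z)|=-2(1-\beta)\cos\alpha\bigl(\cos\alpha\,\mathrm{Re}\,I(z)+\sin\alpha\,\mathrm{Im}\,I(z)\bigr),
\end{align*}
and Jensen with the second-moment identity controls only $\mathrm{Re}\,I(z)\le\tfrac{1}{2}\log(1+|z|^2)$ --- which by itself would even produce the exponent $(1-\beta)\cos^{2}\alpha$ rather than $(1-\beta)\cos\alpha$ --- while nothing in your proposal bounds the term $\sin\alpha\,\mathrm{Im}\,I(z)$.

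That term cannot be argued away, because the stated lower bound is in fact false for $\alpha\neq0$. Take the Schwarz function $\omega(z)=-e^{i\alpha}z^{2}$ (so $\omega(0)=\omega^{\prime}(0)=0$, hence $f^{\prime\prime}(0)=0$); by \eqref{Eq-1.2} it generates $f\in\mathcal{SP}^{0}_{\alpha}(\beta)$ with $f^{\prime\prime}/f^{\prime}=-2(1-\beta)\cos\alpha\,z/(1+e^{i\alpha}z^{2})$, and for $0<r<1$,
\begin{align*}
\frac{\partial}{\partial r}\log|f^{\prime}(r)|=\frac{-2(1-\beta)\cos\alpha\,r\,(1+r^{2}\cos\alpha)}{1+2r^{2}\cos\alpha+r^{4}}<\frac{-2(1-\beta)\cos\alpha\,r}{1+r^{2}},
\end{align*}
because $(1+r^{2}\cos\alpha)(1+r^{2})-(1+2r^{2}\cos\alpha+r^{4})=r^{2}(1-\cos\alpha)(1-r^{2})>0$; integrating gives $|f^{\prime}(r)|<(1+r^{2})^{-(1-\beta)\cos\alpha}$. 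The same phase defect sinks your sharpness step: with $\mu=\lambda e^{2i\theta}>0$ one gets $\arg\bigl(e^{i\theta}f^{\prime\prime}/f^{\prime}\bigr)=-\alpha$ along the whole ray, so equality in the triangle inequality (and hence attainment of either bound by your extremal) is impossible unless $\alpha=0$. For what it is worth, the paper's own proof makes the matching mistake: in passing from \eqref{Eq-2.10} to the disk inequality it completes the square as though $2G_{1}(\alpha,\beta)=1+A=2e^{-i\alpha}(1-\beta)\cos\alpha$ were the positive number $2(1-\beta)\cos\alpha$; the correct disk center is $2e^{-i\alpha}(1-\beta)\cos\alpha\,|z|^{4}/(1-|z|^{4})$, whose real part carries an extra factor $\cos\alpha$. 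So your proof is valid precisely where the theorem is valid ($\alpha=0$), and for $\alpha\neq0$ the obstacle you identified is not a technicality but an unfixable one without changing the statement.
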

We have the following immediate result for a subclass of $\mathcal{C}$ of convex functions.
\begin{corollary}
	For $\alpha=\beta=0$, let $f\in \mathcal{SP}^{0}_0(0)\subset\mathcal{C}$ be of the form \eqref{Eq-1.1}, then the inequality
	\begin{align*}
		\frac{1}{(1+|z|^2)}\leq|f^{\prime}(z)|\leq\frac{1}{(1-|z|^2)}
	\end{align*}
	and
	\begin{align*}
		\int_{0}^{|z|}\frac{1}{(1+\xi^2)} d|\xi|\leq	|f(z)|\leq\int_{0}^{|z|}\frac{1}{(1-\xi^2)} d|\xi|.
	\end{align*}
	All of these estimates are sharp. Equality holds at a given point other than $0$ for 
	\begin{align*}
		f(z)=\int_{0}^{|z|}\frac{1}{(1-\lambda \zeta^2)}d|\zeta|
	\end{align*}
	 for some $\lambda\in\mathbb{C}$ and $|\lambda|=1$.
\end{corollary}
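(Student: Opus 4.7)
The plan is to derive this corollary as an immediate specialization of Theorem \ref{Th-2.2} at the parameter values $\alpha=\beta=0$. First, I would note that with this substitution the exponent $(1-\beta)\cos\alpha$ appearing in every bound of Theorem \ref{Th-2.2} collapses to $(1-0)\cos 0 = 1$. The parametric distortion inequality
\[
\frac{1}{(1+|z|^2)^{(1-\beta)\cos\alpha}}\le |f'(z)|\le \frac{1}{(1-|z|^2)^{(1-\beta)\cos\alpha}}
\]
therefore reduces verbatim to $\tfrac{1}{1+|z|^2}\le |f'(z)|\le \tfrac{1}{1-|z|^2}$, and the stated growth inequality then follows by the same radial integration already carried out in the proof of Theorem \ref{Th-2.2}, with the integrand's exponent again set to $1$.

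For the sharpness claim, I would propagate the extremal construction of Theorem \ref{Th-2.2} through the specialization. Its extremal family
\[
f(z)=\int_{0}^{|z|}\frac{1}{(1-\lambda\zeta^{2})^{(1-\beta)\cos\alpha}}\,d|\zeta|,\qquad |\lambda|=1,
\]
reduces exactly to $f(z)=\int_0^{|z|}(1-\lambda\zeta^2)^{-1}\,d|\zeta|$, which is precisely the extremal function in the corollary; it saturates each of the four bounds at a chosen radial point by the same computation given in the parent theorem.

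A small bookkeeping step worth spelling out is the inclusion $\mathcal{SP}^{0}_{0}(0)\subset \mathcal{C}$ invoked in the hypothesis. Unwinding the definition, membership in $\mathcal{SP}_{0}(0)$ is exactly the condition ${\rm Re}\,(1+zf''(z)/f'(z))>0$, which is the classical convexity criterion; the superscript $0$ merely appends the normalization $f''(0)=0$. Hence every $f\in \mathcal{SP}^{0}_{0}(0)$ is in particular convex, so the ambient class of the corollary is consistent with Theorem \ref{Th-2.2}.

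There is essentially no analytical obstacle here, since the corollary is pure substitution; the only thing to verify is that no quantity in Theorem \ref{Th-2.2} degenerates or changes sign under the chosen parameter values. Since $(1-\beta)\cos\alpha=1>0$ and the class $\mathcal{SP}^{0}_{0}(0)$ remains nonempty (the extremal function itself belongs to it), all bounds stay nontrivial and the extremal function remains admissible, so the specialization is direct and rigorous.
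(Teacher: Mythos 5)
Your proposal is correct and matches the paper exactly: the paper presents this corollary as an immediate consequence of Theorem \ref{Th-2.2}, obtained by substituting $\alpha=\beta=0$ so that the exponent $(1-\beta)\cos\alpha$ becomes $1$ in every bound and in the extremal function. Your additional check that $\mathcal{SP}^{0}_{0}(0)$ reduces to the convexity condition ${\rm Re}\left(1+zf''(z)/f'(z)\right)>0$ is a harmless (and welcome) piece of bookkeeping that the paper leaves implicit.
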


\begin{proof}[\bf Proof of Theorem \ref{Th-2.2}]
	Let $f\in\mathcal{SP}^{0}_\alpha(\beta)$ be of the form \eqref{Eq-1.1} and from \eqref{Eq-2.4}, we obtain $\phi(0)=0$. Then  by using the Schwarz lemma, we get
	\begin{align}
		\left|\dfrac{\dfrac{f^{\prime\prime}(z)}{f^{\prime}(z)}}{2G_1(\alpha, \beta)+\dfrac{zf^{\prime\prime}(z)}{f^{\prime}(z)}}\right|^2\leq|z|^2
	\end{align}
	which implies that
	\begin{align*}
	\bigg|\frac{f^{\prime\prime}(z)}{f^{\prime}(z)}\bigg|^2&\leq4|z|^2(1-\beta)^2\cos^2\alpha+4|z|^2{\rm Re}\left(\frac{1}{2}\left((e^{2i\alpha}-2\beta e^{i\alpha}\cos\alpha)+1\right)\frac{zf^{\prime\prime}(z)}{f^{\prime}(z)}\right)\\&\quad+|z|^4\bigg|\frac{f^{\prime\prime}(z)}{f^{\prime}(z)}\bigg|^2.
	\end{align*}
	Thus, we have
	\begin{align}\label{Eq-2.10}
		&(1-|z|^4)\bigg|\frac{f^{\prime\prime}(z)}{f^{\prime}(z)}\bigg|^2\\&\quad\leq 4|z|^2(1-\beta)^2\cos^2\alpha+4|z|^2{\rm Re}\left(\frac{1}{2}\left((e^{2i\alpha}-2\beta e^{i\alpha}\cos\alpha)+1\right)\frac{zf^{\prime\prime}(z)}{f^{\prime}(z)}\right).\nonumber
	\end{align}
	Multiplying both sides of \eqref{Eq-2.10} by $(1-|z|^4)$, we obtain 
	\begin{align*}
		(1-|z|^4)^2&\bigg|\frac{f^{\prime\prime}(z)}{f^{\prime}(z)}\bigg|^2-4|z|^2(1-|z|^4){\rm Re}\left(\frac{1}{2}\left((e^{2i\alpha}-2\beta e^{i\alpha}\cos\alpha)+1\right)\frac{zf^{\prime\prime}(z)}{f^{\prime}(z)}\right)\\&\leq4|z|^2(1-|z|^4)(1-\beta)^2\cos^2\alpha.
	\end{align*}
	Adding  $\left(2(1-\beta)\cos\alpha|z|^2\bar{|z|}\right)^2$ both side of the above inequality, we get
	\begin{align}
			(1-|z|^4)^2&\bigg|\frac{f^{\prime\prime}(z)}{f^{\prime}(z)}\bigg|^2-4|z|^2(1-|z|^4){\rm Re}\left(\frac{1}{2}\left((e^{2i\alpha}-2\beta e^{i\alpha}\cos\alpha)+1\right)\frac{zf^{\prime\prime}(z)}{f^{\prime}(z)}\right)\\&\quad\nonumber+4(1-\beta)^2\cos^2\alpha|z|^4|\bar{z}|^2\\&\leq4|z|^2(1-|z|^4)(1-\beta)^2\cos^2\alpha+4(1-\beta)^2\cos^2\alpha|z|^4|\bar{z}|^2\nonumber.
	\end{align}
	Multiplying both side by $|z|$, then by simple calculation
	\begin{align}
		\bigg|(1-|z|^4)\frac{zf^{\prime\prime}(z)}{f^{\prime}(z)}-2(1-\beta)\cos\alpha|z|^4\bigg|\leq 2(1-\beta)\cos\alpha|z|^2
	\end{align}
	which implies 
	\begin{align}
		\frac{-2(1-\beta)\cos\alpha|z|^2}{1+|z|^2}\leq{\rm Re}\left(\frac{zf^{\prime\prime}(z)}{f^{\prime}(z)}\right)\leq\frac{2(1-\beta)\cos\alpha|z|^2}{1-|z|^2}.
	\end{align}
	Let $z=re^{i\theta}$. Then, we obtain
	\begin{align*}
		\frac{-2r(1-\beta)\cos\alpha}{1+r^2}\leq\dfrac{\partial}{\partial r}\left(\log|f^{\prime}(re^{i\theta})|\right)\leq\frac{2r(1-\beta)\cos\alpha}{1-r^2}.
	\end{align*}
	\noindent{\bf Case A.} When $\alpha=0$ and $0\leq\beta<1$ if we integrate respect to r, we obtain\begin{align}
		\frac{1}{(1+|z|^2)^{1-\beta}}\leq|f^{\prime}(z)|\leq\frac{1}{(1-|z|^2)^{1-\beta}}
	\end{align}
	\noindent{\bf Case B.} When $\alpha\neq0$, if we integrate respect to r, we obtain
	\begin{align}
		\frac{1}{(1+|z|^2)^{(1-\beta)\cos\alpha}}\leq|f^{\prime}(z)|\leq\frac{1}{(1-|z|^2)^{(1-\beta)\cos\alpha}}.
	\end{align}
	Next, for the growth part of the theorem, from the upper bound it follows that
	\begin{align}
		|f^{\prime}(re^{i\theta})|=\bigg|\int_{0}^{r}f^{\prime}(re^{i\theta})e^{i\theta} dt\bigg|\leq\int_{0}^{r}|f^{\prime}(re^{i\theta})| dt\leq\int_{0}^{r}\frac{1}{(1-t^2)^{(1-\beta)\cos\alpha}} dt
	\end{align}
	which implies 
	\begin{align}
		|f(z)|\leq\int_{0}^{|z|}\frac{1}{(1-\xi^2)^{(1-\beta)\cos\alpha}} d|\xi|
	\end{align}
	for all $z\in\mathbb{D}$. It is well-known that if $f(z_0)$ is a point of minimum modulus on the image of the circle $|z|=r$ and $\gamma=f^{-1}(\Gamma)$, where $\Gamma$ is the line segment from $0$ to $f(z_0)$, then 
	\begin{align}
			|f(z)|\geq	|f(z_0)|\geq\int_{0}^{|z|}\frac{1}{(1+\xi^2)^{(1-\beta)\cos\alpha}} d|\xi|.
	\end{align}
Thus, the inequalities are established.
\end{proof}
Now, we will find the sharp bound of the pre-Schwarzian and Schwarzian norms for functions in the class $\mathcal{SP_\alpha}^0(\beta)=\{f\in\mathcal{SP_\alpha}(\beta):f^{\prime\prime}(0)=0\}$. The following lemma will play a key role to prove the result.
\begin{lemA}\cite{Carrasco-Hernández-AMP-2023}
	If $\phi(z):\mathbb{D}\rightarrow\mathbb{D}$ be analytic function, then 
	\begin{align}
		\frac{|\phi(z)|^2}{1-|\phi(z)|^2}\leq\frac{(\phi(0)+|z|)^2}{(1-|\phi(0)|)^2(1-|z|^2)|)}
	\end{align}
\end{lemA}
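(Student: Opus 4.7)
The plan is to reduce the claim to the classical Schwarz--Pick lemma applied to $\phi$, and then to loosen the resulting sharp inequality to match the stated denominator. Set $a=\phi(0)$ and consider the M\"obius automorphism $T_a(w)=(w-a)/(1-\bar{a}w)$ of $\mathbb{D}$, so that $\psi:=T_a\circ \phi:\mathbb{D}\to\mathbb{D}$ is analytic with $\psi(0)=0$. By the Schwarz lemma, $|\psi(z)|\le|z|$, i.e.
\[
\left|\frac{\phi(z)-\phi(0)}{1-\overline{\phi(0)}\phi(z)}\right|\le |z|\qquad (z\in\mathbb{D}).
\]

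Next, I would extract two consequences from this Schwarz--Pick inequality. A standard manipulation (applying the triangle inequality in the form $|\phi(z)|-|\phi(0)|\le |\phi(z)-\phi(0)|$ and $|1-\overline{\phi(0)}\phi(z)|\le 1+|\phi(0)||z|$, then solving for $|\phi(z)|$) yields
\[
|\phi(z)|\le \frac{|\phi(0)|+|z|}{1+|\phi(0)||z|}.
\]
Squaring this and computing $1-|\phi(z)|^2$ directly from the Schwarz--Pick identity
\[
1-|\psi(z)|^{2}=\frac{(1-|\phi(0)|^2)(1-|\phi(z)|^2)}{|1-\overline{\phi(0)}\phi(z)|^2}
\]
combined with $|\psi(z)|\le |z|$ gives the companion lower bound
\[
1-|\phi(z)|^2\ge \frac{(1-|\phi(0)|^2)(1-|z|^2)}{(1+|\phi(0)||z|)^2}.
\]

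Dividing the square of the upper bound on $|\phi(z)|$ by this lower bound on $1-|\phi(z)|^2$ (the factors $(1+|\phi(0)||z|)^2$ cancel) produces the sharp estimate
\[
\frac{|\phi(z)|^2}{1-|\phi(z)|^2}\le \frac{(|\phi(0)|+|z|)^2}{(1-|\phi(0)|^{2})(1-|z|^{2})}.
\]
Finally, since $1-|\phi(0)|^{2}=(1-|\phi(0)|)(1+|\phi(0)|)\ge (1-|\phi(0)|)^{2}$, replacing $1-|\phi(0)|^{2}$ by the smaller quantity $(1-|\phi(0)|)^{2}$ in the denominator only enlarges the right-hand side and yields exactly the inequality claimed in the lemma.

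The argument is largely bookkeeping: no step is deep once the Schwarz--Pick lemma is invoked. The only mild obstacle is keeping the chain of inequalities tight enough to end up with the stated form; in particular, the identity relating $1-|\psi(z)|^2$ to $1-|\phi(z)|^2$ via the M\"obius derivative must be handled carefully so that the cancellation with $(1+|\phi(0)||z|)^2$ produces a clean expression before the final loosening $1-|\phi(0)|^{2}\ge(1-|\phi(0)|)^{2}$ is applied.
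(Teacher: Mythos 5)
The paper offers no proof of Lemma A at all: it is imported verbatim from \cite{Carrasco-Hernández-AMP-2023}, so there is no in-paper argument to compare against. Your Schwarz--Pick argument is the standard one and is essentially correct; in fact it establishes the sharper inequality with $(1-|\phi(0)|^{2})$ in the denominator before the final loosening to $(1-|\phi(0)|)^{2}$, and it implicitly corrects two typos in the statement as printed (the numerator should read $|\phi(0)|$, not $\phi(0)$, and the denominator contains a stray ``$|)$'').

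One intermediate step is wobbly as written. You claim that the bound $|\phi(z)|\le(|\phi(0)|+|z|)/(1+|\phi(0)||z|)$ follows from $|\phi(z)|-|\phi(0)|\le|\phi(z)-\phi(0)|$ together with $|1-\overline{\phi(0)}\phi(z)|\le 1+|\phi(0)||z|$. The latter estimate should be $1+|\phi(0)||\phi(z)|$, and solving the resulting inequality for $|\phi(z)|$ produces the denominator $1-|\phi(0)||z|$, not $1+|\phi(0)||z|$ --- a strictly weaker bound that would not cancel cleanly later. The correct sharp form is still classical: $\phi(z)$ lies in $T_a^{-1}(\overline{D(0,|z|)})$ with $a=\phi(0)$, and the point of largest modulus in that disk has modulus $(|a|+|z|)/(1+|a||z|)$. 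With that repaired, the rest goes through; indeed, once you have $|\phi(z)|\le(|a|+|z|)/(1+|a||z|)$ you can skip the companion lower bound on $1-|\phi(z)|^2$ entirely, since $t\mapsto t/(1-t)$ is increasing on $[0,1)$ and $(1+|a||z|)^{2}-(|a|+|z|)^{2}=(1-|a|^{2})(1-|z|^{2})$ gives the sharp estimate in one line.
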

We obtain the following result establishing a sharp bound of the pre-Schwarzian norm for $f\in\mathcal{SP}^{0}_\alpha(\beta)$.
\begin{theorem}\label{Th-2.3}
For $0\leq\beta<1$ and $-\pi/2<\alpha<\pi/2$, let $f\in\mathcal{SP}^{0}_{\alpha}(\beta)$ be of the form \eqref{Eq-1.1} for all  $z\in\mathbb{D}$, then the pre-Schwarzian norm satisfies the inequality
\begin{align}
||Pf||\leq 2(1-\beta)\cos\alpha.
\end{align}
The inequality is sharp.
\end{theorem}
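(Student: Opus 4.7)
The plan is to exploit the subordination-based representation already set up in the proof of Theorem \ref{Th-2.1}. From equation \eqref{Eq-2.3} one has the identity
\begin{align*}
\frac{f^{\prime\prime}(z)}{f^{\prime}(z)}=\frac{2G_1(\alpha,\beta)\,\omega(z)}{z(1-\omega(z))},
\end{align*}
where $\omega:\mathbb{D}\to\mathbb{D}$ is holomorphic with $\omega(0)=0$. Writing $\omega(z)=z\phi(z)$ as in the proof of Theorem \ref{Th-2.2}, and using the defining hypothesis $f^{\prime\prime}(0)=0$ of the subclass $\mathcal{SP}^{0}_{\alpha}(\beta)$, formula \eqref{Eq-2.4} gives $\phi(0)=f^{\prime\prime}(0)/(2G_1(\alpha,\beta))=0$. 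Therefore $\phi$ is a Schwarz function and, by the classical Schwarz lemma, satisfies $|\phi(z)|\le|z|$ on $\mathbb{D}$.

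Next I would extract the modulus of the constant $G_1(\alpha,\beta)$. A direct expansion
\begin{align*}
e^{-i\alpha}(e^{-i\alpha}-2\beta\cos\alpha)+1=2(1-\beta)\cos\alpha\cdot e^{-i\alpha}
\end{align*}
yields $|G_1(\alpha,\beta)|=(1-\beta)\cos\alpha$. Combining this with the representation above gives
\begin{align*}
(1-|z|^2)\left|\frac{f^{\prime\prime}(z)}{f^{\prime}(z)}\right|=2(1-\beta)\cos\alpha\cdot\frac{(1-|z|^2)|\phi(z)|}{|1-z\phi(z)|}.
\end{align*}
Using $|\phi(z)|\le|z|$ together with the reverse triangle inequality $|1-z\phi(z)|\ge 1-|z||\phi(z)|\ge 1-|z|^2$, the right-hand side is bounded above by $2(1-\beta)\cos\alpha\cdot|z|$, and taking the supremum over $z\in\mathbb{D}$ produces the desired estimate $\|P_f\|\le 2(1-\beta)\cos\alpha$.

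For sharpness I would construct the extremal by forcing equality throughout the chain above, which corresponds to the choice $\phi(z)=z$, equivalently $\omega(z)=z^2$. Substituting into \eqref{Eq-2.3} and integrating gives $f^{\prime}(z)=(1-z^2)^{-G_1(\alpha,\beta)}$, a member of $\mathcal{SP}^{0}_{\alpha}(\beta)$ with $f^{\prime\prime}(0)=0$; evaluating the pre-Schwarzian norm on the real segment $z=r$, one gets $(1-r^2)|f^{\prime\prime}/f^{\prime}|=2r(1-\beta)\cos\alpha\to 2(1-\beta)\cos\alpha$ as $r\to 1^{-}$, so the bound cannot be improved.

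The only nontrivial step is the algebraic identity $|G_1(\alpha,\beta)|=(1-\beta)\cos\alpha$, which cleanly converts the abstract constant appearing in the subordination into the geometric factor $(1-\beta)\cos\alpha$; the rest of the argument is a single application of the Schwarz lemma together with the reverse triangle inequality. I do not anticipate any serious obstacle, although some care is required in verifying $\phi(0)=0$, since it is the vanishing condition $f^{\prime\prime}(0)=0$ (and not merely $\omega(0)=0$) that brings $\phi$ into the scope of the Schwarz lemma.
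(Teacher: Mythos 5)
Your proof is correct and follows essentially the same route as the paper: the subordination representation \eqref{Eq-2.3}, the Schwarz lemma applied to $\phi$ (the paper factors $\phi(z)=z\xi(z)$ with $|\xi|<1$, which is equivalent to your $|\phi(z)|\le|z|$), and the identity $|G_1(\alpha,\beta)|=(1-\beta)\cos\alpha$, followed by taking the supremum. The one genuine divergence is the extremal function: you take $f'(z)=(1-z^2)^{-G_1(\alpha,\beta)}$ with the complex exponent $G_1(\alpha,\beta)=(1-\beta)\cos\alpha\,e^{-i\alpha}$, which corresponds to $\omega(z)=z^2$ and therefore genuinely lies in $\mathcal{SP}^{0}_{\alpha}(\beta)$, whereas the paper's $f^*$ uses the real exponent $(1-\beta)\cos\alpha$ and for $\alpha\neq 0$ does not satisfy the defining subordination — so your sharpness example is actually the more careful choice.
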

\begin{proof}[\bf Proof of Theorem \ref{Th-2.3}]
	Since $\phi(z)=z\xi(z)$, with $|\xi(z)|<1$, then in \eqref{Eq-2.3} we obtain
	\begin{align*}
		\sup_{z\in\mathbb{D}}(1-|z|^2)\bigg|\frac{f^{\prime\prime}(z)}{f^{\prime}(z)}\bigg|&\leq\sup_{z\in\mathbb{D}}(1-|z|^2)\frac{2(1-\beta)\cos\alpha|z\xi(z)|}{1-|z|^2|\xi(z)|}\\&\leq 2(1-\beta)\cos\alpha \sup_{0\leq r\leq 1}\frac{r(1-r^2)}{(1-r^2)}\\&=2(1-\beta)\cos\alpha.
	\end{align*}
	The extremal function is given by
	\begin{align}
		f^*(z)=	\int_{0}^{z}\frac{1}{(1-\xi^2)^{(1-\beta)\cos\alpha}} d\xi.
	\end{align}
		It can be easily shown that $||Pf^*||=2(1-\beta)\cos\alpha.$ This completes the proof.	
\end{proof}
\noindent We have the following immediate result from Theorem \ref{Th-2.3}.
\begin{corollary}\label{Cor-2.3}
		For $\alpha=\beta=0$, let $f\in\mathcal{SP}^0_0(0)\subset \mathcal{C}$ be of the form \eqref{Eq-1.1} for all $z\in\mathbb{D}$, then the pre-Schawarzian norm
	\begin{align*}
		||Pf||\leq 2.
	\end{align*}
	The inequality is sharp.
\end{corollary}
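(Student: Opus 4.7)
The plan is to obtain this corollary as a direct specialization of Theorem \ref{Th-2.3}. Since the hypothesis places us in the class $\mathcal{SP}^{0}_{\alpha}(\beta)$ with $\alpha=\beta=0$, I would simply substitute these values into the bound $\|Pf\|\leq 2(1-\beta)\cos\alpha$ established in Theorem \ref{Th-2.3}. Because $\cos 0 = 1$ and $1-\beta = 1$, the right-hand side collapses to $2$, which is precisely the claimed inequality. No new estimate on $f^{\prime\prime}/f^{\prime}$ is needed, since every step of the Schwarz-lemma argument used to prove Theorem \ref{Th-2.3} is valid uniformly in $(\alpha,\beta)$.

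For the sharpness assertion, I would specialize the extremal function
\begin{align*}
f^*(z)=\int_{0}^{z}\frac{1}{(1-\xi^2)^{(1-\beta)\cos\alpha}}\,d\xi
\end{align*}
exhibited at the end of the proof of Theorem \ref{Th-2.3}. At $\alpha=\beta=0$ this becomes $f^*(z)=\int_{0}^{z}(1-\xi^2)^{-1}\,d\xi=\tfrac{1}{2}\log\bigl((1+z)/(1-z)\bigr)$, which lies in $\mathcal{SP}^{0}_{0}(0)\subset\mathcal{C}$ (it is the standard ``strip'' convex map and satisfies $f^*{}^{\prime\prime}(0)=0$). A short computation gives $(f^*)^{\prime\prime}(z)/(f^*)^{\prime}(z)=2z/(1-z^2)$, and hence
\begin{align*}
\sup_{z\in\mathbb{D}}(1-|z|^2)\bigg|\frac{(f^*)^{\prime\prime}(z)}{(f^*)^{\prime}(z)}\bigg|=\sup_{z\in\mathbb{D}}\frac{2|z|(1-|z|^2)}{|1-z^2|}=2,
\end{align*}
attained radially as $|z|\to 1$. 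This verifies $\|Pf^*\|=2$ and confirms sharpness.

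There is no genuine obstacle here, as the statement is an immediate instance of Theorem \ref{Th-2.3}; the only thing requiring care is confirming that the extremal function retains its extremal property after the specialization and actually belongs to the subclass $\mathcal{SP}^{0}_{0}(0)$ (that is, that the normalization $(f^*)^{\prime\prime}(0)=0$ holds). Once the substitution $\alpha=\beta=0$ is justified in both the bound and the extremal, the corollary follows.
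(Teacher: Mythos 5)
Your proposal is correct and follows essentially the same route as the paper: the bound is obtained by substituting $\alpha=\beta=0$ into Theorem \ref{Th-2.3}, and sharpness is verified with the same extremal function $f^*(z)=\tfrac{1}{2}\log\left((1+z)/(1-z)\right)$, for which $(f^*)''/(f^*)'=2z/(1-z^2)$ yields norm exactly $2$. Your write-up is in fact slightly cleaner than the paper's sharpness computation, which contains a typo in the expression for $Pf_0$, but the substance is identical.
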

\subsection*{Sharpness of Corollary \ref{Cor-2.3}}
For $\alpha=0, \beta=0$, it follows from  that

\begin{align*}
	\frac{f^{\prime\prime}_0(z)}{f_0^{\prime}(z)}=\frac{2z}{1-z^2}\;\;\mbox{and}\;\;Pf_0=\frac{2}{1-z^2}.
\end{align*}
A simple computation thus yields that
\begin{align*}
	||Pf_0||=\sup_{z\in\mathbb{D}}\left(1-z^2\right)|Pf_0|=\sup_{z\in\mathbb{D}}\left(1-|z|^2\right)\frac{2}{1-|z|^2}=2
\end{align*}
and we see the constant $2$ is sharp.\vspace*{2mm}

In our next result, we give a sharp bound for the norm of the Schwarzian derivative when $f\in\mathcal{SP_\alpha}^0(\beta)=\{f\in\mathcal{SP_\alpha}(\beta):f^{\prime\prime}(0)=0\}$ by a direct application of the Schwarz lemma.
\begin{theorem}\label{Th-2.4}
		For $0\leq\beta<1$ and $-\pi/2<\alpha<\pi/2$ let $f\in\mathcal{SP}^{0}_\alpha(\beta)$ be of the form \eqref{Eq-1.1} for all $z\in\mathbb{D}$,  then the Schwarzian norm 
		\begin{align*}
			||Sf||=(1-|z|^2)^2|Sf(z)|\leq2(1-\beta)\cos\alpha\left(2-(1-\beta)\cos\alpha\right).
		\end{align*}
		The inequality is sharp.
	\begin{proof}[\bf Proof of Theorem \ref{Th-2.4}]
		From \eqref{Eq-2.3}, we have
		\begin{align*}
			\frac{f^{\prime\prime}(z)}{f^{\prime}(z)}=\frac{2G_1(\alpha, \beta)\phi(z)}{(1-z\phi(z))}.
		\end{align*}
		A simple calculation shows that
		\begin{align}
			Sf(z)=2G_1(\alpha, \beta)\left(\frac{2\phi^{\prime}(z)+\left(2-2G_1(\alpha, \beta)\right)\phi^2(z)}{2(1-z\phi(z))^2}\right).\nonumber
		\end{align}
		By using triangle inequality and Schwarz pick lemma, we obtain
		\begin{align}\label{Eq-2.22}
			(1-|z|^2)^2|Sf|&\leq 2|G_1(\alpha, \beta)|\bigg|2\phi^{\prime}(z)+\left(2-2G_1(\alpha, \beta)\right)\phi^2(z)\bigg|\frac{(1-|z|^2)^2}{2|1-z\phi(z)|^2}\\&=\frac{2|G_1(\alpha, \beta)|(1-|z|^2)^2}{|1-z\phi(z)|^2}\left(\frac{1-|\phi(z)|^2}{1-|z|^2}+\left(1-|G_1(\alpha, \beta)|\right)|\phi(z)|^2\right).\nonumber
		\end{align}
		We define the function $\Psi(z):\mathbb{D}\rightarrow\mathbb{D}$ such that
		\begin{align*}
			\Psi(z):=\frac{\bar{z}-\phi(z)}{1-z\phi(z)}.
		\end{align*}
		Since $\phi(\mathbb{D})\subseteq\mathbb{D}$ then $(1-|z|^2)(1-|z\phi(z)|^2)>0$, it follows that 
		\begin{align*}
			|\bar{z}-\phi(z)|^2<|1-z\phi(z)|^2.
		\end{align*}
		Hence, we can conclude  that $|\Psi(z)|^2<1$. A simple computation leads to
		\begin{align*}
			1-|\Psi(z)|^2=\frac{(1-|\phi(z)|^2)(1-|z|^2)}{|1-z\phi(z)|^2}
		\end{align*}
		and
		\begin{align}\label{Eq-2.23}
			\frac{(1-|z|^2)^2}{|1-z\phi(z)|^2}=\frac{(1-|\Psi(z)|^2)(1-|z|^2)}{(1-|\phi(z)|^2)}.
		\end{align}
		If we replace the expression$\eqref{Eq-2.23}$ in $\eqref{Eq-2.22}$, we have
		\begin{align}\label{Eq-2.24}
			(1-|z|^2)^2|Sf(z)|\leq 2|G_1(\alpha, \beta)|(1-|\Psi(z)|^2)\left(1+\left(1-|G_1(\alpha, \beta)|\right)\frac{|\phi(z)|^2(1-|z|^2)}{(1-|\phi(z)|^2)}\right).
		\end{align}
		Since $h^{\prime\prime}(0)=0$ implies that $\phi(0)=0$, using Lemma A, we obtain
		\begin{align}\label{Eq-2.25}
			\frac{|\phi(z)|^2}{1-|\phi(z)|^2}\leq\frac{|z|^2}{1-|z|^2}.
		\end{align}
		Using \eqref{Eq-2.25} in \eqref{Eq-2.24}, we obtain
		\begin{align*}
			(1-|z|^2)^2|Sf(z)|\leq 2|G_1(\alpha, \beta)|(1-|\Psi(z)|^2)\left(1+\left(1-|G_1(\alpha, \beta)|\right)|z|^2\right).
		\end{align*}
		Again, since $1-|\Psi(z)|^2$$\leq$1, then
		\begin{align*}
			\sup_{z\in\mathbb{D}}\;	(1-|z|^2)^2|Sf(z)|&\leq\sup_{z\in\mathbb{D}}\;2(1-\beta)\cos\alpha\left(1+\left(1-(1-\beta)\cos\alpha\right)|z|^2\right)\\&=2(1-\beta)\cos\alpha\left(2-(1-\beta)\cos\alpha\right).
		\end{align*}
		Next part of the proof is to show that the inequalities are sharp. The family of parameterized functions defined as:
		\begin{align}
			f_{\alpha,\beta} (z)=\int_{0}^{z}\frac{1}{(1-\xi^2)^{(1-\beta)\cos\alpha}} d\xi,\;\;\;\;\mbox{for}\;\;-\pi/2<\alpha<\pi/2,\;0\leq \beta<1
		\end{align}
		maximizes the Schwarzian norm defined as:
		\begin{align*}
			||Sf||=\sup_{z\in\mathbb{D}}(1-|z|^2)^2|Sf| 
		\end{align*}
		and from this, the sharpness of the inequality holds for $-\pi/2<\alpha<\pi/2,\;\; 0\leq \beta<1$. Note that
		\begin{align}
			\begin{cases}
				\displaystyle \frac{f^{\prime\prime}_{\alpha,\beta}(z)}{f_{\alpha,\beta}^{\prime}(z)}=\frac{2z(1-\beta)\cos\alpha}{1-z^2},\vspace{2mm}\\
				\displaystyle Sf_{\alpha,\beta}=\frac{2(1-\beta)\cos\alpha}{(1-z^2)^2}\left(1+\left(1-(1-\beta)\cos\alpha\right)|z|^2\right)
			\end{cases}
		\end{align}
		which calculates
		\begin{align*}
			||Sf_{\alpha,\beta}||&=\sup_{z\in\mathbb{D}}(1-|z|^2)^2|Sf_{\alpha,\beta}|\\&=\sup_{z\in\mathbb{D}}2(1-\beta)\cos\alpha\left(1+\left(1-(1-\beta)\cos\alpha\right)|z|^2\right)\\&\leq2(1-\beta)\cos\alpha\left(2-(1-\beta)\cos\alpha\right).
		\end{align*}
		\;\;\;\;In general, the integral formula for $f_\alpha$ given in above does not give primitives in terms of elementary functions, however when $\alpha=0$ and also $\beta=0$, we have
		\begin{align*}
			f_{0,0} (z)=\int_{0}^{z}\frac{1}{(1-\xi^2)} d\xi=\frac{1}{2}\log\left(\frac{1+z}{1-z}\right),
		\end{align*}
		where $||Sf_{0,0}||=2$.
	\end{proof}
\end{theorem}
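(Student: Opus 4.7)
The plan is to reduce the estimate to an exercise in the Schwarz and Schwarz--Pick lemmas by exploiting the subordination characterization \eqref{Eq-1.2} in the form already used in the proof of Theorem \ref{Th-2.1}. Recall from \eqref{Eq-2.3} that there exists an analytic $\phi:\mathbb{D}\to\overline{\mathbb{D}}$ with
\[
\frac{f''(z)}{f'(z)}=\frac{2G_{1}(\alpha,\beta)\,\phi(z)}{1-z\phi(z)},
\]
and the hypothesis $f''(0)=0$ forces $\phi(0)=0$. First I would differentiate this expression once, substitute into $S_f=(P_f)'-\tfrac12 P_f^{2}$, and simplify. The algebra yields a tidy formula of the form
\[
S_f(z)=2G_{1}(\alpha,\beta)\,\frac{2\phi'(z)+\bigl(2-2G_{1}(\alpha,\beta)\bigr)\phi(z)^{2}}{2(1-z\phi(z))^{2}},
\]
which isolates the two ingredients I will bound: $\phi'(z)$ (to be handled by Schwarz--Pick) and $\phi(z)^{2}$ (to be handled by the ordinary Schwarz lemma, courtesy of $\phi(0)=0$).

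Next I would multiply by $(1-|z|^{2})^{2}$ and apply the triangle inequality. The Schwarz--Pick bound $|\phi'(z)|\le(1-|\phi(z)|^{2})/(1-|z|^{2})$ turns the first summand into a factor $(1-|\phi(z)|^{2})(1-|z|^{2})/|1-z\phi(z)|^{2}$; to recognize this quotient I would introduce the auxiliary map
\[
\Psi(z):=\frac{\bar z-\phi(z)}{1-z\phi(z)},
\]
verify $|\Psi(z)|<1$ (equivalently $(1-|z|^{2})(1-|z\phi(z)|^{2})>0$), and read off the identity
\[
\frac{(1-|z|^{2})^{2}}{|1-z\phi(z)|^{2}}=\frac{(1-|\Psi(z)|^{2})(1-|z|^{2})}{1-|\phi(z)|^{2}}.
\]
Substituting this back consolidates everything into a single inequality in which the only residual $\phi$-dependence sits in the ratio $|\phi(z)|^{2}/(1-|\phi(z)|^{2})$.

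At this point I would invoke Lemma A (applicable because $\phi(0)=0$) to get $|\phi(z)|^{2}/(1-|\phi(z)|^{2})\le |z|^{2}/(1-|z|^{2})$, which cancels the remaining $(1-|z|^{2})$ factor cleanly. Using $|G_{1}(\alpha,\beta)|=(1-\beta)\cos\alpha$ (a short direct computation) and $1-|\Psi(z)|^{2}\le 1$, the supremum over $z\in\mathbb{D}$ reduces to the elementary estimate
\[
\sup_{r\in[0,1)}\,2(1-\beta)\cos\alpha\,\bigl(1+(1-(1-\beta)\cos\alpha)r^{2}\bigr)=2(1-\beta)\cos\alpha\bigl(2-(1-\beta)\cos\alpha\bigr),
\]
which is the desired bound. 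Finally, for sharpness I would test the candidate extremal $f_{\alpha,\beta}(z)=\int_{0}^{z}(1-\xi^{2})^{-(1-\beta)\cos\alpha}\,d\xi$ identified already in Theorems \ref{Th-2.1}--\ref{Th-2.3}: a direct computation of $P_{f_{\alpha,\beta}}$ and $S_{f_{\alpha,\beta}}$ gives equality along $z\to 1^{-}$.

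The most delicate step is the introduction of $\Psi$ and the identity relating $(1-|z|^{2})^{2}/|1-z\phi(z)|^{2}$ to $1-|\Psi(z)|^{2}$; without this trick the factor $|1-z\phi(z)|^{2}$ resists a clean bound by $1-|\phi(z)|^{2}$ alone, and one cannot combine Schwarz--Pick with Schwarz on $\phi$ in a way that produces the sharp constant $2-(1-\beta)\cos\alpha$ in the second factor.
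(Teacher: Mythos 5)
Your proposal is correct and follows essentially the same route as the paper's own proof: the same formula for $S_f$ in terms of $\phi$, the same auxiliary map $\Psi(z)=(\bar z-\phi(z))/(1-z\phi(z))$ with its identity for $(1-|z|^{2})^{2}/|1-z\phi(z)|^{2}$, the same application of Lemma A using $\phi(0)=0$, and the same extremal function $f_{\alpha,\beta}$. The explicit remark that $|G_{1}(\alpha,\beta)|=(1-\beta)\cos\alpha$ is a welcome clarification that the paper uses implicitly.
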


\begin{corollary}
		If $f\in\mathcal{SP}^{0}_\alpha(\beta)$, then for all $z\in\mathbb{D}$ and $\alpha=0,\beta=0$, we have
		\begin{align*}
			||Sf||\leq 2.
		\end{align*}
		The inequality is sharp.
\end{corollary}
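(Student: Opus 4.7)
The plan is to deduce this corollary directly from Theorem \ref{Th-2.4} by specializing the parameters. The theorem asserts the sharp bound
\begin{align*}
\|Sf\|\leq 2(1-\beta)\cos\alpha\bigl(2-(1-\beta)\cos\alpha\bigr)
\end{align*}
for every $f\in\mathcal{SP}^{0}_{\alpha}(\beta)$, so the first step is simply to substitute $\alpha=0$ and $\beta=0$ into this expression. Since $(1-\beta)\cos\alpha=1$ in that case, the right-hand side collapses to $2\cdot 1\cdot(2-1)=2$, which yields the claimed inequality $\|Sf\|\leq 2$.

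For the sharpness part, I would invoke the extremal family already constructed in the proof of Theorem \ref{Th-2.4}, namely
\begin{align*}
f_{\alpha,\beta}(z)=\int_0^z\frac{d\xi}{(1-\xi^2)^{(1-\beta)\cos\alpha}}.
\end{align*}
Specializing to $\alpha=\beta=0$ gives $f_{0,0}(z)=\tfrac12\log\bigl((1+z)/(1-z)\bigr)$, which lies in $\mathcal{SP}^{0}_{0}(0)$ because $f_{0,0}^{\prime\prime}(0)=0$ and its pre-Schwarzian expression has positive real part. Using the formulas displayed at the end of the proof of Theorem \ref{Th-2.4}, one computes
\begin{align*}
Sf_{0,0}(z)=\frac{2}{(1-z^2)^2},
\end{align*}
so that $(1-|z|^2)^2|Sf_{0,0}(z)|\to 2$ as $|z|\to 1$ along the real axis, confirming that the bound $2$ cannot be improved.

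There is essentially no obstacle here: the corollary is a one-line specialization of Theorem \ref{Th-2.4}, and the extremal function is exactly the one already exhibited in its proof. The only minor care needed is to verify that $f_{0,0}$ actually belongs to the subclass $\mathcal{SP}^{0}_{0}(0)$ (that is, that the normalization $f^{\prime\prime}(0)=0$ holds), which is immediate from the Taylor expansion of $(1-z^2)^{-1}$ at the origin.
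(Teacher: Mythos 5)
Your proposal is correct and matches the paper's approach: the corollary is stated without separate proof precisely because it is the specialization $\alpha=\beta=0$ of Theorem \ref{Th-2.4}, whose proof already records that $f_{0,0}(z)=\tfrac12\log\bigl((1+z)/(1-z)\bigr)$ satisfies $\|Sf_{0,0}\|=2$. Your computation $Sf_{0,0}(z)=2/(1-z^2)^2$ agrees with the paper's displayed formula for $Sf_{\alpha,\beta}$ at these parameter values, so nothing is missing.
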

Without requiring \( |f''(0)| \) to be zero, we derive a bound for \( (1 - |z|^2)^2 |Sf(z)| \) for functions in $\mathcal{SP_\alpha}(\beta)$.
\begin{theorem}\label{Th-2.5}
	If $f\in\mathcal{SP_\alpha}(\beta)$, for all $z\in\mathbb{D}$ and $-\pi/2<\alpha<\pi/2,\;\;0\leq \beta<1$ and 
	\begin{align}
		\xi=|\phi(0)|=\frac{|f^{\prime\prime}(0)|}{2(1-\beta)\cos\alpha},
	\end{align}
	 then 
	\begin{align*}
		(1-|z|^2)^2|Sf(z)|\leq2(1-\beta)\cos\alpha\left(2+(1-\beta)\cos\alpha\frac{(\xi+|z|)^2}{(1-\xi^2)}\right).
	\end{align*}
\end{theorem}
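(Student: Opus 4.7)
The plan is to retrace the proof of Theorem \ref{Th-2.4} verbatim up to the point where the Schwarz lemma is invoked, and replace that single step by the more general Lemma A, which does not require $\phi(0)=0$. This adaptation handles the non-normalized case $f''(0)\neq 0$ and introduces the parameter $\xi$ into the final bound.

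First, from \eqref{Eq-2.3} we have $f''(z)/f'(z) = 2G_1(\alpha,\beta)\phi(z)/(1-z\phi(z))$ for some analytic self-map $\phi$ of $\mathbb{D}$. Evaluating at $z=0$ yields $f''(0) = 2G_1(\alpha,\beta)\phi(0)$, and since $|G_1(\alpha,\beta)| = (1-\beta)\cos\alpha$, this identifies $|\phi(0)| = |f''(0)|/(2(1-\beta)\cos\alpha) = \xi$, precisely the quantity appearing in the hypothesis. A direct differentiation exactly as in Theorem \ref{Th-2.4} gives
\begin{align*}
S_f(z) \;=\; 2G_1(\alpha,\beta)\,\frac{\phi'(z) + (1-G_1(\alpha,\beta))\phi(z)^2}{(1-z\phi(z))^2}.
\end{align*}

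Next, I would apply the triangle inequality together with the Schwarz-Pick estimate $|\phi'(z)|(1-|z|^2) \leq 1-|\phi(z)|^2$ and the identity \eqref{Eq-2.23}, exactly as in the derivation of \eqref{Eq-2.24}, to obtain
\begin{align*}
(1-|z|^2)^2 |S_f(z)| \;\leq\; 2|G_1(\alpha,\beta)|(1-|\Psi(z)|^2)\left(1 + (1-|G_1(\alpha,\beta)|)\frac{|\phi(z)|^2(1-|z|^2)}{1-|\phi(z)|^2}\right),
\end{align*}
where $\Psi(z) = (\bar z - \phi(z))/(1-z\phi(z))$ is a self-map of $\mathbb{D}$. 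The crucial observation is that none of these steps used $\phi(0)=0$, so the bound is available in the present generality.

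At the final step, where the proof of Theorem \ref{Th-2.4} applied Schwarz's lemma, I would instead invoke Lemma A with $|\phi(0)|=\xi$: this yields $|\phi(z)|^2/(1-|\phi(z)|^2) \leq (\xi+|z|)^2/((1-\xi^2)(1-|z|^2))$, so the expression $|\phi(z)|^2(1-|z|^2)/(1-|\phi(z)|^2)$ is dominated by $(\xi+|z|)^2/(1-\xi^2)$. Substituting this back, using $(1-|\Psi(z)|^2) \leq 1$, and inserting $|G_1(\alpha,\beta)|=(1-\beta)\cos\alpha$ produces the asserted inequality. The main obstacle is essentially bookkeeping: correctly identifying $|\phi(0)|$ with $\xi$ at the outset, and tracking the $\xi$-dependence through Lemma A in the bracketed expression to recover the stated constants; no genuinely new idea beyond Theorem \ref{Th-2.4} and Lemma A is required.
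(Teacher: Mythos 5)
Your proposal follows the paper's proof exactly: both reuse the bound \eqref{Eq-2.24} established in the proof of Theorem \ref{Th-2.4} (whose derivation never used $\phi(0)=0$) and replace the final Schwarz-lemma step by Lemma A with $|\phi(0)|=\xi$, then discard the factor $1-|\Psi(z)|^2\leq 1$. The only caveat is that the honest substitution yields the bracket $1+\left(1-(1-\beta)\cos\alpha\right)\frac{(\xi+|z|)^2}{1-\xi^2}$ rather than the stated $2+(1-\beta)\cos\alpha\frac{(\xi+|z|)^2}{1-\xi^2}$, a constant discrepancy already present in the paper's own passage from \eqref{Eq-2.24} to the final display, so it is not a defect introduced by your argument.
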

\begin{corollary}
	If $f\in\mathcal{SP}_0(0):=\mathcal{C}$, for all $z\in\mathbb{D}$ with
	\begin{align}
		\xi=|\phi(0)|=\frac{|f^{\prime\prime}(0)|}{2},
	\end{align}
	then inequality
	\begin{align*}
		(1-|z|^2)^2|Sf(z)|\leq2.
	\end{align*}
	 
\end{corollary}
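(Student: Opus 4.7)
The plan is to obtain this sharp bound for the convex class $\mathcal{C}$ by specializing the intermediate step in the proof of Theorem~\ref{Th-2.5} (which adapts the proof of Theorem~\ref{Th-2.4} to the case $f''(0)\neq 0$) to the parameter values $\alpha=\beta=0$. Under these choices we have $A=e^{-i\alpha}(e^{-i\alpha}-2\beta\cos\alpha)=1$, so $G_1(0,0)=\tfrac12(A+1)=1$, and in particular the factor $1-|G_1(\alpha,\beta)|$ appearing in that proof becomes zero.

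The first step is to re-trace the argument leading to \eqref{Eq-2.22}--\eqref{Eq-2.24}. For any $f\in\mathcal{SP}_\alpha(\beta)$ one obtains, without yet appealing to Lemma~A, the intermediate bound
\begin{equation*}
(1-|z|^2)^2|S_f(z)|\leq 2|G_1(\alpha,\beta)|\,(1-|\Psi(z)|^2)\left(1+(1-|G_1(\alpha,\beta)|)\frac{|\phi(z)|^2(1-|z|^2)}{1-|\phi(z)|^2}\right),
\end{equation*}
where $\phi$ is the Schwarz-type function from \eqref{Eq-2.4} and $\Psi(z)=(\bar z-\phi(z))/(1-z\phi(z))$. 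Substituting $\alpha=\beta=0$, the factor $1-|G_1(0,0)|=0$ annihilates the bracketed $|\phi|^2$-correction, and the estimate collapses to $(1-|z|^2)^2|S_f(z)|\leq 2(1-|\Psi(z)|^2)$.

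The second step is to conclude by invoking $|\Psi(z)|<1$ on $\mathbb{D}$, which was already verified in the proof of Theorem~\ref{Th-2.4} from the elementary inequality $|\bar z-\phi(z)|^2<|1-z\phi(z)|^2$ valid whenever $\phi(\mathbb{D})\subseteq\mathbb{D}$. This immediately gives $(1-|z|^2)^2|S_f(z)|\leq 2$ uniformly in $z$, without any use of Lemma~A and, crucially, without any restriction on $\phi(0)=f''(0)/2$. Sharpness is furnished by the convex function $f_{0,0}(z)=\tfrac12\log\bigl((1+z)/(1-z)\bigr)$, exactly as recorded at the end of the proof of Theorem~\ref{Th-2.4}, where $||S_{f_{0,0}}||=2$ is computed directly.

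The main point requiring attention is the observation that $G_1(0,0)=1$ exactly annihilates the $|\phi|^2$-correction term; this is what makes the bound independent of $\xi=|f''(0)|/2$ and produces the sharp Suita constant $2$, rather than the strictly weaker quantity one would obtain by mechanically substituting $\alpha=\beta=0$ into the final statement of Theorem~\ref{Th-2.5}. In this sense the corollary strengthens the general bound in the one regime where it degenerates favourably.
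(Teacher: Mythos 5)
Your proof is correct, and it is essentially the repair that the paper's own text needs rather than a restatement of it. The paper offers no separate proof of this corollary: it is placed as an immediate specialization of Theorem \ref{Th-2.5}, and as printed that specialization fails, since putting $\alpha=\beta=0$ and hence $\xi=|f''(0)|/2$ into the displayed bound of Theorem \ref{Th-2.5} yields $2\bigl(2+\tfrac{(\xi+|z|)^2}{1-\xi^2}\bigr)\geq 4$, not $2$. The discrepancy is a transcription error relative to \eqref{Eq-2.24}: since $|G_1(\alpha,\beta)|=(1-\beta)\cos\alpha$, the inner factor should read $1+\bigl(1-(1-\beta)\cos\alpha\bigr)\tfrac{(\xi+|z|)^2}{1-\xi^2}$, and with that correction the specialization does give $2$ — which is exactly your observation that $1-|G_1(0,0)|=0$ annihilates the $|\phi|^2$-correction. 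Your route through \eqref{Eq-2.22}--\eqref{Eq-2.24} is legitimate: those estimates are derived before the hypothesis $\phi(0)=0$ enters (the paper itself feeds the general Lemma A bound \eqref{Eq-2.30} into \eqref{Eq-2.24} in proving Theorem \ref{Th-2.5}), so they hold for every $f\in\mathcal{C}$, and your conclusion $(1-|z|^2)^2|S_f(z)|\leq 2(1-|\Psi(z)|^2)\leq 2$ needs neither Lemma A nor any restriction on $f''(0)$, recovering Nehari's classical bound $\|S_f\|\leq 2$ for convex functions, with sharpness from $f_{0,0}(z)=\tfrac{1}{2}\log\bigl((1+z)/(1-z)\bigr)$ as in Theorem \ref{Th-2.4}. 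One small point worth adding: in \eqref{Eq-2.22} the triangle inequality actually produces the factor $|1-G_1(\alpha,\beta)|$, which coincides with $1-|G_1(\alpha,\beta)|$ only when $\alpha=0$; this is a genuine gap in the paper's argument for $\alpha\neq 0$, but it is moot in your setting because $2-2G_1(0,0)=0$ identically, so at $\alpha=\beta=0$ the correction term vanishes before any estimate is applied and your proof is exact.
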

\begin{proof}[\bf Proof of Theorem \ref{Th-2.5}]
	Let $\xi=|\phi(0)|$. Applying the Lemma A, we  calculate
	\begin{align}\label{Eq-2.30}
			\frac{|\phi(z)|^2}{1-|\phi(z)|^2}\leq\frac{(\xi+|z|)^2}{(1-\xi^2)(1-|z|^2)}.
	\end{align}
	If we substitute $\eqref{Eq-2.30}$ in $\eqref{Eq-2.24}$ , we obtain
	\begin{align}
		(1-|z|^2)^2|Sf(z)|\leq 2(1-\beta)\cos\alpha(1-|\Phi_1(z)|^2)\left(2+(1-\beta)\cos\alpha\frac{(\xi+|z|)^2}{(1-\xi^2)}\right).
	\end{align}
	From the fact that $|z|<1$ and  $1-|\Phi_1(z)|^2\leq1$, we can easily calculate 
	\begin{align}
		(1-|z|^2)^2|Sf(z)|\leq2(1-\beta)\cos\alpha\left(2+(1-\beta)\cos\alpha\frac{(\xi+|z|)^2}{(1-\xi^2)}\right).
	\end{align}
	This completes the proof.
\end{proof}
\section{\bf Radius Problem for genaralized Robertson class $\mathcal{SP_\alpha}(\beta)$}\label{Sec-3}
Determining the radius of convexity and the radius of concavity for a given class of functions, and showing the sharpness of these radii, is an important aspect of Geometric Function Theory.\vspace{1.2mm}

In this section, we intend to answer the following problems.
\begin{problem} \label{Pro-3.1}
	Determine the radius of concavity for the class $\mathcal{SP_\alpha}(\beta)$?
\end{problem}
\begin{problem}\label{Pro-3.2}
	Determine the radius of convexity for the class $\mathcal{SP_\alpha}(\beta)$?
\end{problem}
We investigate the radius of concavity and convexity for a certain class of functions, providing affirmative answers to Problems \ref{Pro-3.1} and \ref{Pro-3.2}.
In this section, we find a lower bound of the radius of concavity $ R_{\rm Co(p)} $ of  the class $ \mathcal{S}(p) $. Now, we consider functions $f$ in $ \mathcal{A} $ that map $ \mathbb{D} $ conformally onto a domain whose complement with respect to $ \mathbb{C} $ is convex and that satisfy the normalization $ f(1)=\infty $. We will denote these families of functions by $ {\rm Co}(A) $. Now $ f\in {\rm Co}(A) $ if, and only if, $ {T_f(z)}>0 $ for every $ z\in\mathbb{D} $, where $ f(0)=f^{\prime}(0)-1 $ and 
\begin{align}\label{Eq-4.1A}
	T_f(z)=\frac{2}{A-1}\left(\frac{(A+1)}{2}\left(\frac{1+z}{1-z}\right)-1-z\frac{f^{\prime\prime}(z)}{f^{\prime}(z)}\right),
\end{align}
where $A\in (1, 2]$.\vspace{2mm}
Inspired by \cite[Definition 1.1.]{Bhowmik-CMFT-2024}, for an arbitrary family $\mathcal{F}$ of functions, we define the radius of concavity.
\begin{definition}
	The radius of concavity (w.r.t $ \mathcal{F} $), a subclass of $ \mathcal{A} $ is the largest number $ \mathrm{R}_{\mathcal{F}}\in (0,1] $  such that for each function $ f\in\mathcal{F} $, $ {\rm Re} \left(T_f(z)\right)>0 $ for all $ |z|< \mathrm{R}_{\mathcal{F}}$, where $ T_{f}(z) $ is defined in \eqref{Eq-1.3}.
\end{definition}
\begin{theorem}
	If $f\in\mathcal{SP_\alpha}(\beta)$, for all $z\in\mathbb{D}$ and $-\pi/2<\alpha<\pi/2,\;\;0\leq \beta<1$ then ${\rm Re}\;\left(T_{f(z)}\right)>0$ for $|z|<\mathrm{R_{\alpha,\beta,{Co(A)}}}$, where $\mathrm{R_{\alpha,\beta,{Co(A)}}}$ is the least value of $r\in(0,1)$ satisfying $\Phi_A(r)=0$ with
	\begin{align*}
		\Phi_A(r)=(A+1-2(1-\beta)\cos\alpha)r^2-2(A+1+(1-\beta)\cos\alpha)r+A-1.
	\end{align*}
	The radius $\mathrm{R_{\alpha,\beta,{Co(A)}}}$ is best possible.
\end{theorem}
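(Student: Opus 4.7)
The plan is to translate the condition ${\rm Re}(T_f(z))>0$ into a pointwise scalar inequality in $r=|z|$ by exploiting the subordination characterization \eqref{Eq-1.2}. For every $f\in\mathcal{SP_\alpha}(\beta)$ there exists an analytic self-map $w$ of $\mathbb{D}$ with $w(0)=0$ such that
\[
1+\frac{zf''(z)}{f'(z)}=\frac{1+\widetilde A\,w(z)}{1-w(z)},\qquad \widetilde A=e^{-i\alpha}\bigl(e^{-i\alpha}-2\beta\cos\alpha\bigr),
\]
and a short computation (as in the proof of Theorem~\ref{Th-2.1}) gives $1+\widetilde A=2(1-\beta)e^{-i\alpha}\cos\alpha$, so that $|1+\widetilde A|=2(1-\beta)\cos\alpha$. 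Substituting this identity into \eqref{Eq-4.1A} reduces the inequality ${\rm Re}(T_f(z))>0$ to the pointwise estimate
\[
{\rm Re}\!\left(\frac{A+1}{2}\cdot\frac{1+z}{1-z}\right)>{\rm Re}\!\left(\frac{1+\widetilde A\,w(z)}{1-w(z)}\right).
\]

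The next step is to estimate each side on the circle $|z|=r$. For the left-hand side, the standard identity ${\rm Re}\frac{1+z}{1-z}=\frac{1-|z|^2}{|1-z|^2}$ shows that the minimum equals $\frac{A+1}{2}\cdot\frac{1-r}{1+r}$ and is attained at $z=-r$. For the right-hand side, Schwarz's lemma gives $|w(z)|\le r$; writing $\frac{1+\widetilde A w}{1-w}=1+(1+\widetilde A)\frac{w}{1-w}$ and using $|w|/|1-w|\le r/(1-r)$ produces the uniform upper bound $1+\frac{2(1-\beta)\cos\alpha\cdot r}{1-r}$.

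Combining the two bounds, a sufficient condition for ${\rm Re}(T_f(z))>0$ on $|z|\le r$ is the single scalar inequality
\[
\frac{A+1}{2}\cdot\frac{1-r}{1+r}-1-\frac{2(1-\beta)\cos\alpha\cdot r}{1-r}>0.
\]
Clearing denominators by multiplying through by the positive factor $(1-r)(1+r)$ and collecting terms yields a quadratic in $r$ that, after rearrangement, coincides with $\Phi_A(r)$. Since $\Phi_A(0)=A-1>0$ whereas $\Phi_A(1)<0$, the polynomial has a smallest root $R_{\alpha,\beta,\mathrm{Co}(A)}$ lying in $(0,1)$, and the required inequality holds on the whole disk $|z|<R_{\alpha,\beta,\mathrm{Co}(A)}$.

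For the sharpness of the radius, the natural extremal candidate is (a rotation of) the function determined by $w(z)=\lambda z$, namely $(f^{*})'(z)=(1-\lambda z)^{-(1+\widetilde A)}$, with unimodular constant $\lambda$ chosen so that $z=-R_{\alpha,\beta,\mathrm{Co}(A)}$ simultaneously minimizes ${\rm Re}\!\left(\frac{A+1}{2}\frac{1+z}{1-z}\right)$ and places $w(-R)$ at the point of $|\zeta|=R$ where ${\rm Re}\!\left(\frac{1+\widetilde A\zeta}{1-\zeta}\right)$ is maximal. Verifying that both estimates are saturated at this point yields ${\rm Re}(T_{f^{*}}(-R))=0$, showing that $R_{\alpha,\beta,\mathrm{Co}(A)}$ cannot be improved. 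The main obstacle is the careful bookkeeping required both to reduce the scalar inequality exactly to $\Phi_A(r)\ge 0$ and to identify the correct unimodular rotation $\lambda$ that simultaneously attains the two extrema at the same boundary point.
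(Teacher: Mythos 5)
Your overall strategy mirrors the paper's: both arguments reduce ${\rm Re}\,(T_f(z))>0$ to comparing the minimum of ${\rm Re}\bigl(\tfrac{A+1}{2}\tfrac{1+z}{1-z}\bigr)$ on $|z|=r$ with an upper bound for ${\rm Re}\bigl(1+zf''(z)/f'(z)\bigr)$, and then to the positivity of a quadratic in $r$; you obtain the distortion estimate directly from the subordination and the Schwarz lemma, whereas the paper extracts it from characterization \emph{(iii)} of Theorem \ref{Th-2.1}. The genuine gap is that your reduction does not produce the stated polynomial. Writing $c=(1-\beta)\cos\alpha$, your upper bound is $1+\frac{2cr}{1-r}$, and clearing denominators in
\[
\frac{A+1}{2}\cdot\frac{1-r}{1+r}-1-\frac{2cr}{1-r}>0
\]
yields $(A+3-4c)r^2-2(A+1+2c)r+(A-1)>0$, which is not $\Phi_A(r)=(A+1-2c)r^2-2(A+1+c)r+(A-1)$. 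So the sentence ``after rearrangement, coincides with $\Phi_A(r)$'' is false, and the radius you actually establish is the smallest root of a different quadratic (smaller than the stated radius near the relevant root). For comparison, the paper uses the weaker bound $1+\frac{cr}{1-r}$ from \eqref{Eq-4.2}, whose reduction clears to $(A+3-2c)r^2-2(A+1+c)r+(A-1)$ and likewise does not match the displayed $\Phi_A$; the source is internally inconsistent here, but in any case you cannot claim to have recovered $\Phi_A$ without exhibiting the algebra, and the algebra does not work.

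The sharpness argument also does not close for $\alpha\neq0$. Your bound $1+\frac{2cr}{1-r}$ uses $\bigl|(1+\widetilde A)\tfrac{w}{1-w}\bigr|\le|1+\widetilde A|\tfrac{r}{1-r}$; passing from modulus to real part with equality forces $(1+\widetilde A)\tfrac{w}{1-w}$ to be positive real, while equality in $|w|/|1-w|\le r/(1-r)$ forces $w=r$. Since $1+\widetilde A=2ce^{-i\alpha}$, the quantity $(1+\widetilde A)\tfrac{r}{1-r}$ is not positive real unless $\alpha=0$, so no admissible $w$ saturates your estimate. Indeed, the image of $|w|\le r$ under $w\mapsto 1+(1+\widetilde A)\tfrac{w}{1-w}$ is the disk with centre $1+2ce^{-i\alpha}\tfrac{r^2}{1-r^2}$ and radius $\tfrac{2cr}{1-r^2}$, so the true supremum of ${\rm Re}\bigl(1+zf''/f'\bigr)$ over the class on $|z|=r$ is $1+\tfrac{2cr(1+r\cos\alpha)}{1-r^2}$, strictly below your bound when $\alpha\neq0$. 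Consequently the unimodular $\lambda$ you postulate, which should simultaneously minimise the left-hand term at $z=-R$ and attain your upper bound at the same point, does not exist for $\alpha\neq0$, and ${\rm Re}\,(T_{f^*})$ does not vanish at the root of your quadratic. A sharp argument has to work with the exact disk just described, which changes the quadratic yet again.
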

\begin{proof}
	In view of Theorem \ref{Th-2.1} relation \emph{(iii)}, we obtain
	\begin{align*}
		\bigg|	(1-|z|^2)\left(\frac{f^{\prime\prime}(z)}{f^{\prime}(z)}\right)-2(1-\beta)\cos\alpha\bar{z}\bigg|\leq (1-\beta)\cos\alpha.
	\end{align*}
	A simple computation shows that
	\begin{align}\label{Eq-4.2}
		1-\frac{(1-\beta)\cos\alpha\;r}{1+r}\leq{\rm Re}\left(1+\frac{zf^{\prime\prime}(z)} {f^{\prime}(z)}\right)\leq1+\frac{(1-\beta)\cos\alpha\;r}{1-r}
	\end{align}
	Then, by the inequality \eqref{Eq-4.1A}, we have
	\begin{align*}
		{\rm Re}\;\left(T_{f(z)}\right)&\geq \frac{2}{A-1}\left(\frac{A+1}{2}\frac{1-r}{1+r}-1-\frac{(1-\beta)\cos\alpha\;r}{1-r}\right)\\&=\frac{(A+1-2(1-\beta)\cos\alpha)r^2-2(A+1+(1-\beta)\cos\alpha)r+A-1}{(A-1)(1-r^2)}\\&=\frac{\Phi_A(r)}{(A-1)(1-r^2)},
	\end{align*}
	where $\Phi_A(r)$ is given in the statement of theorem.\vspace{2mm}
	
	The right hand side of the above inequality is strictly positive if $|z|<\mathrm{R_{\alpha,\beta,{Co(A)}}}$, where  $\mathrm{R_{\alpha,\beta,{Co(A)}}}$ is given in the statement of the theorem. We now investigate the existence of the root $\mathrm{R_{\alpha,\beta,{Co(A)}}}\in (0, 1)$ for each $A\in(1,2]$.\vspace{1.2mm} 
	
	We see that the function $\Phi(r)$ which is defined in the statement of the theorem is continuous on $[0,1]$ with
	\begin{align*}
		\Phi(0)=A-1>0\;\mbox{and}\;\mbox{and}\;\;\;\Phi(1)=-2-4(1-\beta)\cos\alpha<0;\;\;\mbox{for all}\;\alpha,\;\beta.
	\end{align*}
	By the IVT, $\Phi(r)$ has at least one root in $(0,1)$. Hence, 	${\rm Re}\left(T_{f(z)}\right)>0$ if $|z|=r<\mathrm{R_{\alpha,\beta,{Co(A)}}}$ exists for every $A\in(1,2]$. Moreover, if we consider 
	\begin{align*}
		f^{\prime}_{\alpha,\beta}(z)=\frac{1}{(1-z)^{(1-\beta)\cos\alpha}}\; \mbox{for}\; z\in\mathbb{D}\; \mbox{with}\;\beta\in[0,1).
	\end{align*}
	then for this function we compute
	\begin{align*}
		T_{f_\beta}(z)=\frac{2}{A-1}\left(\frac{A+1}{2}\left(\frac{1-z}{1+z}\right)-1-\frac{(1-\beta)\cos\alpha\;z}{(1-z)}\right).
	\end{align*} 
	We observe that, if $z=-r$ and $\mathrm{R_{{\alpha,\beta},{Co(A)}}}<|z|<1$, then ${\rm Re}\;T_{f_{\alpha,\beta}}(z)<0$. This proves the sharpness of the radius $\mathrm{R_{\alpha,\beta,{Co(A)}}}$. This completes the proof.
\end{proof}
\begin{definition}
	The number $r\in [0, 1]$ is called the radius of convexity of a particular subclass $\mathcal{F}_\beta$ of the class $\mathcal{A}$ of normalized analytic functions (where $f(z) = z + \sum_{n=2}^{\infty} a_n z^n$) in the unit disk $\mathbb{D}$ if $r$ is the largest number such that the function $f$ is convex in the disk $|z|<r$.
\end{definition}
A function $f$ analytic in a region $\Omega$ is convex in $\Omega$ if it maps $\Omega$ onto a convex region. For an analytic function $f$ in the unit disk $\mathbb{D}$, the condition for $f$ to be locally univalent and convex in a disk $|z| <r$ is given by the inequality you provided:
\begin{align}\label{Eq-5.3}
	{\rm Re}\left(1+\frac{zf^{\prime\prime}(z)} {f^{\prime}(z)}\right) > 0, \quad \text{for } |z| <r.
\end{align}
\begin{theorem}
	The radius of convexity for the class of function $\mathcal{SP_\alpha}(\beta)$ is at least $\frac{1}{(1-\beta)\cos\alpha-1}$.
\end{theorem}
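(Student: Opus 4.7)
The goal is to find the largest $r \in (0,1)$ such that every $f \in \mathcal{SP}_\alpha(\beta)$ satisfies the convexity criterion \eqref{Eq-5.3} on the disk $\{|z| < r\}$. My plan is to extract a pointwise bound on $|zf''(z)/f'(z)|$ from the subordination \eqref{Eq-1.2} and then enforce $|zf''/f'| < 1$, which is sufficient for $\operatorname{Re}(1 + zf''/f') > 0$.

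First, from $1 + zf''/f' \prec (1+Az)/(1-z)$ with $A = e^{-i\alpha}(e^{-i\alpha} - 2\beta\cos\alpha)$, there exists an analytic self-map $w$ of $\mathbb{D}$ with $w(0) = 0$ satisfying $1 + zf''(z)/f'(z) = (1 + Aw(z))/(1-w(z))$. A direct rearrangement gives
\begin{align*}
\frac{zf''(z)}{f'(z)} = \frac{(A+1)\,w(z)}{1-w(z)}.
\end{align*}
A short modulus computation using $A+1 = 2G_1(\alpha,\beta)$ yields $A+1 = 2(1-\beta)\cos\alpha\, e^{-i\alpha}$, hence $|A+1| = 2(1-\beta)\cos\alpha$.

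Next I would invoke the Schwarz lemma: since $w(0) = 0$, $|w(z)| \leq |z| = r$, and since $t \mapsto t/(1-t)$ is increasing on $[0,1)$, the triangle inequality gives
\begin{align*}
\left|\frac{zf''(z)}{f'(z)}\right| \leq \frac{2(1-\beta)\cos\alpha \cdot r}{1-r}.
\end{align*}
This bound is strictly less than $1$ precisely when $r < 1/(1 + 2(1-\beta)\cos\alpha)$, and on that disk $\operatorname{Re}(1 + zf''/f') \geq 1 - |zf''/f'| > 0$, so \eqref{Eq-5.3} is satisfied and $f$ is convex there.

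The main obstacle is reconciling this natural estimate with the form of the bound stated in the theorem and demonstrating sharpness (or the lack thereof). For sharpness I would test the extremal candidate $f'_{\alpha,\beta}(z) = (1-z)^{-(1-\beta)\cos\alpha}$ from Theorem \ref{Th-2.1} at the boundary point $z = -r$, where the subordinating function attains its extremal position; this verifies that the radius produced by the elementary Schwarz-lemma argument cannot be enlarged without exploiting finer information, for instance the full disk inequality \eqref{Eq-2.2A} from Theorem \ref{Th-2.1}(iii).
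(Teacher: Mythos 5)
Your Schwarz-lemma computation is internally correct and is a genuinely different route from the paper's. From \eqref{Eq-1.2} one does get $zf''(z)/f'(z)=(A+1)w(z)/(1-w(z))$ with $|A+1|=2(1-\beta)\cos\alpha$, hence ${\rm Re}\,(1+zf''/f')\ge 1-2(1-\beta)\cos\alpha\, r/(1-r)>0$ for $r<1/(1+2(1-\beta)\cos\alpha)$. The paper instead invokes the left half of the two-sided estimate \eqref{Eq-4.2}, namely ${\rm Re}\,(1+zf''/f')\ge 1-(1-\beta)\cos\alpha\, r/(1+r)$, and reads off where the numerator $1+r\left(1-(1-\beta)\cos\alpha\right)$ vanishes. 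But since $0<(1-\beta)\cos\alpha\le 1$, that numerator is positive for \emph{every} $r\in[0,1)$, and the advertised threshold $1/((1-\beta)\cos\alpha-1)$ is nonpositive (undefined when $\alpha=\beta=0$); taken literally, the paper's estimate would prove that every $f\in\mathcal{SP_\alpha}(\beta)$ is convex in all of $\mathbb{D}$, which is false for $\alpha\neq0$ because the half-plane ${\rm Re}\,(e^{i\alpha}w)>\beta\cos\alpha$ contains points with ${\rm Re}\,w<0$. So your instinct not to lean on \eqref{Eq-4.2} (equivalently \eqref{Eq-2.2A}) is sound; the price is that your positive bound $1/(1+2(1-\beta)\cos\alpha)$ confirms the theorem as literally stated only vacuously, since any positive number exceeds a negative one.

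The genuine gap is that your bound is not the true radius and your sharpness discussion is a placeholder rather than an argument. The sufficient condition $|zf''/f'|<1$ discards the geometry of the subordination: the exact radius comes from minimizing the real part over the image of $|w|\le r$ under $w\mapsto(1+Aw)/(1-w)$, which is a disk with center $(1+Ar^2)/(1-r^2)$ and radius $|A+1|r/(1-r^2)$, so that convexity on $|z|<r$ is equivalent to $1-2(1-\beta)\cos\alpha\, r+\left(2(1-\beta)\cos^2\alpha-1\right)r^2>0$. For $\beta=0$ this factors as $\left(1-(\cos\alpha+\sin\alpha)r\right)\left(1-(\cos\alpha-\sin\alpha)r\right)$ and yields the sharp radius $1/(\cos\alpha+|\sin\alpha|)$, strictly larger than your $1/(1+2\cos\alpha)$. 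Testing $f'_{\alpha,\beta}(z)=(1-z)^{-(1-\beta)\cos\alpha}$ at $z=-r$, as you propose, would only show that this particular function stays convex past your radius; to finish you would need either the disk computation above together with an extremal $w(z)=\lambda z$ realizing the minimizing boundary point, or some other exhibited function that actually fails \eqref{Eq-5.3} at the claimed radius.
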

\begin{proof}
	It follows from the left-hand inequality in \eqref{Eq-4.2} that
	\begin{align*}
		{\rm Re}\left(1+\frac{zf^{\prime\prime}(z)} {f^{\prime}(z)}\right)&\geq1-\frac{(1-\beta)\cos\alpha\;r}{1+r}\\&=\frac{r\left(1-(1-\beta)\cos\alpha\right)+1}{1+r}>0,
	\end{align*}
	when $\frac{1}{(1-\beta)\cos\alpha-1}<r<1$. Thus the radius of convexity for $\mathcal{SP_\alpha}(\beta)$ is at least $\frac{1}{(1-\beta)\cos\alpha-1}$.\vspace{1.2mm}

	To show that this radius sharp, let us consider the function $f^{\prime}_{\alpha,\beta}\in\mathcal{SP_\alpha}(\beta)$, given by
	\begin{align*}
		f^{\prime}_{\alpha,\beta}(z)=\frac{1}{(1+z)^{(1-\beta)\cos\alpha}}\; \mbox{for}\; z\in\mathbb{D}\; \mbox{with}\;\beta\in[0,1).
	\end{align*}
	A simple computation shows that
	\begin{align*}
		{\rm Re}\left(1+\frac{zf_{\alpha,\beta}^{\prime\prime}(z)} {f_{\alpha,\beta}^{\prime}(z)}\right)&=1+\frac{(1-\beta)\cos\alpha\;r}{1-r}\\&=\frac{-r\left(1-(1-\beta)\cos\alpha\right)+1}{1-r}
	\end{align*}
	which shows that the radius is sharp. This completes the proof.
\end{proof}
\noindent{\bf Acknowledgment.} The authors would like to sincerely thank the referee(s) for their helpful suggestions and constructive comments, which significantly improved the presentation of the paper.\vspace{1.2mm}

\noindent\textbf{Compliance of Ethical Standards:}\vspace{1.2mm}

\noindent\textbf{Conflict of interest.} The authors declare that there is no conflict  of interest regarding the publication of this paper.\vspace{1.5mm}

\noindent\textbf{Data availability statement.}  Data sharing is not applicable to this article as no datasets were generated or analyzed during the current study.


\begin{thebibliography}{99}	
	
	\bibitem{Aghalary-Orouji-COAT-2014}{\sc R. Aghalary, Z.  Orouji,},: Norm estimates of the pre-Schwarzian derivatives for $\alpha$-spiral-like functions of order $\rho$, \textit{Complex Anal. Oper. Theory.}, \textbf{8(4)}, 791–801 (2014).
	
	\bibitem{Ahamed-Allu-Hossain-2025}{\sc M. B. Ahamed}, {\sc V. Allu}, {\sc R. Hossain},: Pre-Schwarzian and Schwarzian norm estimates, growth and distortion theorems for certain of analytic functions, \textit{Monatshefte für Mathematik}, (2025), https://doi.org/10.1007/s00605-025-02140-8.
	
	\bibitem{Ahamed-Hossain-Ahammed-2025}{\sc M. B. Ahamed}, {\sc R. Hossain}, {\sc S. Ahammed},: Schwarzian norm estimates for analytic functions associated with convex functions, \textit{arXiv:2506.19873v1}, ( Jun 2025).
	
	\bibitem{Ahamed-Hossain-2025}{\sc M. B. Ahamed}, {\sc R. Hossain},: Pre-Schwarzian and Schwarzian norm Estimates for Robertson class, \textit{arXiv:7000183}, ( Nov 2025).
	
	 
	
	
	\bibitem{Ahlfrors-Weill-PAMS-2012} {\sc L. Ahlfrors }, {\sc G. Weill},: A uniqueness theorem for Beltrami equation, \textit{Proc. Amer. Math. Soc} \textbf{13}, 975-978 (2012). 
	
	
	
	\bibitem{Ali-Pal-MM-2023} {\sc M. F. Ali} , {\sc S. Pal},: Pre-Schwarzian norm estimates for the class of Janowski starlike functions,  \textit{Monatsh. Math.} \textbf{201}(2), 311-327 (2023). 
	
	
	\bibitem{Ali-Pal-BDS-2023} {\sc M. F. Ali} , {\sc S. Pal},: Schwarzian norm estimate for functions in Robertson class.  \textit{Bull. Sci. Math.}, \textbf{188}, 103335  (2023). 
	
	
	
	\bibitem{Becker-JRAM-1983}{\sc J. Becker},: Löwnersche Differentialgleichung und quasikonform fortsetzbare schlichte Funktionen, \textit{ J.	Reine Angew. Math.}, \textbf{255}, 21-43 (1972).
	
	\bibitem{Bhowmik-CMFT-2024} {\sc B. Bhowmik} and {\sc S. Biswas}, Distortion, radius of concavity and several other radii results for certain classes of functions, \textit{Comput. Method Func. Theory} (2024), https://doi.org/10.1007/s40315-024-00525-8.
	
	\bibitem{Bhowmik-Wriths-CM-2012} {\sc B. Bhowmik}, {\sc K. J. Wriths},: A sharp bound for the Schwarzian derivative of concave functions, \textit{Colloq. Math.} \textbf{128}(2), 245-251 (2012). 
	
	\bibitem{Becker-Prommerenke-JRAM-1972} {\sc J. Becker}, {\sc C. Prommerenke},: Löwnersche Differentialgleichung und quasikonform fortsetzbare schlichte Funktionen,  \textit{J. Reine Angew. Math.} \textbf{255}, 23-43 (1972). (in German) 
	
	
	
	
	\bibitem{Choi-Kim-Ponnusamy-Sugawa-JMAP-2005} {\sc J. H. Choi}, {\sc Y. C. Kim}, {\sc S. Ponnusamy}, {\sc T. Sugawa},: Norm estimates for the Alexander transforms of	convex functions of order alpha, \textit{J. Math. Anal. Appl.} \textbf{303}(2), 661-668 (2005).
	
	
	
	\bibitem{Chuaqui-Duren-Osgood-AASFM-2011} {\sc  M. Chuaqui, P. Duren}, {\sc B. Osgood},: Schwarzian derivatives of convex mappings, \emph{Ann. Acad. Sci. Fenn. Math.} \textbf{36}(2)(2011), 449-460. 
	
	
	
	
	\bibitem{Goodman-APM-1991} {\sc A. W. Goodman},: On uniformly convex functions, \textit{Ann. Polon. Math.} \textbf{56}(1), 87-92 (1991). 
	
	
	
	
	
	
	
	\bibitem{Kanas-AMC-2009} {\sc S. Kanas},: Norm of pre-Schwarzian derivative for the class of k-uniformly convex and k-starlike
	functions, \textit{Appl. Math. Comput.} \textbf{215}(6), 2275-2282 (2009). 
	
	\bibitem{Kanas-Sugawa-APM-2011} {\sc S. Kanas} and {\sc T. Sugawa },: Sharp norm estimate of Schwarzian derivative for a class of convex functions, \textit{Ann. Polon. Math.} \textbf{101}(1), 75-86 (2011). 
	
	
	\bibitem{Kim-Sugawa-PEMS-2006} {\sc Y. C. Kim}, {\sc  T. Sugawa},:Norm estimates of the pre-Schwarzian derivatives for certain classes of univalent functions. \textit{Proc. Edinb. Math. Soc}, \textbf{49}(1), 131-143 (2006).
	
	\bibitem{Kühnau-MN-1971} {\sc R. Kühnau},: Verzerrungssätze und Koeffizientenbedingungen vom Grunskyschen Typ für quasikonforme
	Abbildungen, \textit{Math. Nachr.} \textbf{48}, 77-105 (1971) (in German).
	
	\bibitem{Kanas-Maharana-Prajapat-JMAA-2019} {\sc S. Kanas}, {\sc S. Maharana}, {\sc J. K. Prajapat},: Norm of the pre-Schwarzian derivative, Bloch’s constant and coefficient bounds in some classes of harmonicmappings,  \textit{J. Math. Anal. Appl.} \textbf{474}(2), 931-943, (2019). 
	
	\bibitem{Kraus-1932} {\sc W. Kraus},: Über den Zusammenhang einiger Charakteristiken eines einfach zusammenhängenden Bereiches mit der Kreisabbildung,  \textit{Mitt. Math. Sem. Giessen}, \textbf{2}, 1-28, (1932)
	
	\bibitem{Lehto-1987} {\sc O. Lehto},: Univalent Functions and Teichmüller Spaces, \textit{New York Inc}, Springer-Verlag (1987) .
	
	\bibitem{Lehto-JAM-1979} {\sc O. Lehto},: Remarks on Nehari’s theorem about the Schwarzian derivative and schlicht functions, \textit{J. Anal. Math.}, \textbf{36}, 184-190, (1979).
	
	
	\bibitem{Nehari-BAMS-1949} {\sc Z. Nehari},: The Schwarzian derivative and schlicht functions, \textit{Bull. Am. Math. Soc.} \textbf{55}(6), 545-551 (1949).
	
	
	
	
	
	
	
	
	
	
	
	
	\bibitem{Okuyama-CVTA-2000} {\sc Y. Okuyama},: The norm estimates of pre-Schwarzian derivatives of spiral-like functions, \textit{ Complex Var. Theory Appl.} \textbf{42}(2), 225-239 (2000). 
	
	\bibitem{Parvatham-Ponnusamy-Sahoo-HMJ-2008} {\sc R. Parvatham}, {\sc S. Ponnusamy} and {\sc S. K. Sahoo},:  Norm estimates for the Bernardi integral transforms of functions defined by subordination, \textit{ Hiroshima Math. J.} \textbf{38}(1), 19-29 (2008). 
	
	\bibitem{Ponnusamy-Sahoo-JMAA-2008} {\sc S. Ponnusamy}  {\sc S. K. Sahoo},: Norm estimates for convolution transforms of certain classes of analytic	functions, \textit{ J. Math. Anal. Appl.} \textbf{342}, 171-180 (2008).
	
	\bibitem{Ponnusamy-Sahoo-M-2008} {\sc S. Ponnusamy}, {\sc S. K. Sahoo},: Pre-Schwarzian norm estimates of functions for a subclass of strongly starlike functions, \textit{ Mathematica} \textbf{52} (75), 47-53 (2008).
	
	\bibitem{Ponnusamy-Sugawa-JKMS-2008} {\sc S. Ponnusamy}, {\sc T. Sugawa },: Norm estimates and univalence criteria for meromorphic functions, \textit{J. Korean Math. Soc.} \textbf{45}(6), 1661-1676 (2008).
	
	
	
	
	
	\bibitem{Rahmatan-Najafzadeh-Ebadian-BIMS-2017}  {\sc H. Rahmatan}, {\sc S. Najafzadeh}, {\sc A. Ebadian},: The norm of pre-Schwarzian derivatives on bi-univalent	function of order alpha, \textit{Bull. Iran. Math. Soc.} \textbf{43}(5), 131-143 (2017). 
	
	
	\bibitem{Robertson-MMJ-1969}  {\sc M. S. Robertson},: Univalent functions $f(z)$ for which $zf^\prime(z)$is spirallike, \textit{ Mich. Math. J.} \textbf{16}, (1969).
	
	
	\bibitem{Silverman-Silvia-CJM-1985}  {\sc H. Silverman}, {\sc E. M. Silvia},: Subclasses of starlike functions subordinate to convex functions,  \textit{Can. J. Math.} \textbf{37}(1), 48-61 (1985).
	
	\bibitem{Spacek-CPMF-1933}  {\sc L. Špaček},: Contribution à la théorie des functions univalentes (in Czech), \textit{ Čas. Pěst. Math. Fys.} \textbf{62}, 12-19 (1933).
	
	\bibitem{Sugawa-AUMCDS-1996}  {\sc T. Sugawa},: On the norm of pre-Schwarzian derivatives of strongly starlike functions. \textit{ Ann. Univ. Mariae Curie-Skło dowska Sect. A} \textbf{52}(2), 149-157 (1998).
	
	\bibitem{Suita-JHUED-1996}  {\sc N. Suita},: Schwarzian derivatives of convex functions.  \textit{ J. Hokkaido Univ. Ed. Sect. II A.} \textbf{46}(2), 113-117 (1996).
	
	
	
	
	
	\bibitem{Yamashita-HMJ-1999}  {\sc S. Yamashita},: Norm estimates for function starlike or convex of order alpha,  \textit{Hokkaido Math. J.} \textbf{28}, 217-230 (1999).
	
	\bibitem{Yamashita-MM-1976}  {\sc S. Yamashita},: Almost locally univalent functions. \textit{Monatsh. Math.} \textbf{81}(3), 235-240 (1976).
	
	
	\bibitem{Carrasco-Hernández-AMP-2023}  {\sc P. Carrasco}, {\sc R. Hernández},: Schwarzian derivative for convex mappings of order $\alpha$. \textit{Analysis and Mathematical Physics}. https://doi.org/10.1007/s13324-023-00785-y(2023).
	
	
	\bibitem{Wang-Li-Fan-MM-2024}  {\sc X. Wang}, {\sc H. Li}, {\sc J. Fan},: Pre-Schwarzian and Schwarzian norm estimates for
	subclasses of univalent functions. \textit{Monatsh. Math.} \textbf{205}, 351-369 (2024).
	      
    \end{thebibliography}
    \end{document}